\titleformat*{\section}{\large\bfseries}
\titleformat*{\subsection}{\normalsize\bfseries}
\titleformat*{\subsubsection}{\normalsize\itshape}
\definecolor{green}{rgb}{0,0.75,0}
\definecolor{purple}{rgb}{0.5,0,0.5}
\newcommand{\1}{\mathds{1}}
\newcommand\dd{\mathrm{d}}
\newcommand{\E}{\mathbb{E}}
\newcommand{\R}{\mathbb{R}}
\renewcommand{\Pr}{\mathbb{P}}
\newcommand{\N}{\mathbb{N}}
\renewcommand{\S}{\mathbb{S}}
\newcommand{\tr}{\mathrm{tr}}
\newcommand{\eqd}{\stackrel{d}{=}}
\newcommand{\Corr}{\mathrm{Corr}}
\newcommand{\Var}{\mathrm{Var}}
\newtheorem{example}{Example}
\newtheorem{theorem}{Theorem}
\newtheorem{lemma}{Lemma}
\newtheorem{proposition}{Proposition}
\newtheorem{remark}{Remark}
\newtheorem{definition}{Definition}
\newtheorem{assumption}{Assumption}
\begin{document}

\title{Tube formula for spherically contoured random fields with subexponential marginals}
\author{Satoshi Kuriki\thanks{The Institute of Statistical Mathematics, Tokyo, Japan, \texttt{kuriki@ism.ac.jp}} 
\thanks{The Graduate University for Advanced Studies, SOKENDAI, Hayama, Japan}, Evgeny Spodarev\thanks{Institute of Stochastics, Ulm University, Germany, \texttt{evgeny.spodarev@uni-ulm.de}}}

\date{}

\maketitle

\begin{abstract}
It is widely known that the tube method, or equivalently the Euler characteristic heuristic, provides a very accurate approximation for the tail probability that the supremum of a smooth Gaussian random field exceeds a threshold value $c$.
The relative approximation error $\Delta(c)$
is exponentially small as a function of $c$ when $c$ tends to infinity.
On the other hand, little is known about non-Gaussian random fields.

In this paper, we obtain the approximation error of the tube method applied to 
the canonical isotropic random fields on a unit sphere
defined by $u\mapsto\langle u,\xi\rangle$, $u\in M\subset\mathbb{S}^{n-1}$, where $\xi$ is a spherically contoured random vector.
These random fields have statistical applications in multiple testing and simultaneous regression inference when the unknown variance is estimated.
The decay rate of the relative error $\Delta(c)$ depends on the tail of the distribution of $\|\xi\|^2$ and the critical radius of the index set $M$.
If this distribution is subexponential but not regularly varying, $\Delta(c)\to 0$ as $c\to\infty$. 
However, in the regularly varying case, $\Delta(c)$ does not vanish and hence is not negligible.
To address this limitation, we provide simple upper and lower bounds for $\Delta(c)$
and for the tube formula itself.
Numerical studies are conducted to assess the accuracy of the asymptotic approximation.

\end{abstract}

\textbf{Keywords}: {
 Bonferroni approximation,
 critical radius,
 Euler characteristic heuristic,
 multivariate Mills' ratio,
 regularly varying tails,
 simultaneous inference, 
 sub-Gaussian random field,
 upper tail dependence coefficient.
}

\textbf{AMS subject classification 2020}: {Primary 60G60; Secondary 60D05}
\medskip


\section{Introduction}

Let $\eta$ be a random vector uniformly distributed on the unit sphere $\S^{n-1}$ in $n$-dimensional Euclidean space $\R^n$.
For a positive random variable $r$ independent of $\eta$, set $\xi=r\eta$.
Define a random field on $\S^{n-1}$ by
\begin{equation}
\label{Xi_u}
 \Xi_u := \langle u,\xi\rangle = r \langle u,\eta\rangle, \quad u\in\S^{n-1}.
\end{equation}
The random field (\ref{Xi_u}) is isotropic, that is, invariant under the orthogonal transformations $u\mapsto g u$, $g\in O(n)$ of indices $u\in\S^{n-1}$.
We call the random field $\Xi$ in (\ref{Xi_u}) a \emph{spherically contoured random field}
because the distribution of the random vector $\xi$ is spherically contoured.
The random field $\Xi$ in (\ref{Xi_u}) is a special case of elliptically contoured random fields (see \cite{jensen-foutz:1989}).
Different from the standard spherical harmonics setting on the sphere, where second-order integrability is assumed (e.g., \cite{marinucci-peccati:2011}), we explicitly allow for heavy-tailed $r=\|\xi\|$ with $\E r^2=+\infty$.
A class of random fields defined by
\begin{equation}
\label{Xi_u-again}
 \Xi_u = \langle u,\zeta\rangle/s, \quad u\in\S^{n-1},
\end{equation}
where $\zeta$ is an $n$-dimensional standard Gaussian vector, and
$s$ is a positive random variable stochastically independent of $\zeta$,
constitutes a proper subclass of the spherical contoured random field (\ref{Xi_u}).
Indeed, when $\Vert\xi\Vert=r$ has a compact support, $\xi$ cannot be expressed as $\zeta/s$ in (\ref{Xi_u-again}).
For other examples of $\xi$ not expressible as $\zeta/s$, see, e.g., \cite[Theorem 2.1]{Gomez-etal:2008}.
In statistical applications, $s$ in (\ref{Xi_u-again}) can be interpreted as an estimator of the unknown scale parameter of $\zeta$ (Section \ref{sec:pre}).

Let $M$ be a closed subset of $\S^{n-1}$
satisfying some smoothness conditions to be described in detail later.
In this paper, we consider the upper tail probability of the supremum on $M$:
\begin{equation}
\label{sup_Xi}
 P(c) := \Pr\biggl(\sup_{u\in M} \Xi_u \ge c\biggr).
\end{equation}

Here we note that the form (\ref{sup_Xi}) is  canonical in the following sense.
Consider a random field defined by a centered Gaussian random field $(f_t)_{t\in T}$ multiplied by an independent random variable $s^{-1}$ (e.g., $s^{-2}$ is totally skewed $\alpha/2$-stable distributed in the $\alpha$-stable sub-Gaussian random field case \cite{adler-samorodnitsky-taylor:2010}, $0<\alpha<2$).
Assume that $f_t$ is unit-variance having a finite Karhunen–Lo\`{e}ve expansion $f_t=\langle \phi(t),\zeta\rangle_{\R^n}$, where $\phi(t)=(\phi_i(t))_{1\le i\le n}$.
Then, the supremum of such a random field over the index set $T$ is written as
\begin{equation*}
 \sup_{t\in T}s^{-1}f_t
 = \sup_{u\in M}\Xi_u, \quad\mbox{where}\ \ %
 M:=\phi(T)\subset \S^{n-1}.
\end{equation*}

The upper level set of the random function $u\mapsto \Xi_u$ with threshold $c$,
\[
 M_c := \{u\in M\mid \Xi_u=\langle u,\xi\rangle \ge c\},
\]
is referred to as the \emph{excursion set} of $\Xi$ on $M$ at a level $c>0$.
Since $P(c)=\Pr(M_c\ne\emptyset)$, it is alternatively called the \emph{excursion probability}.
The random field $\Xi$ does not attain its maximum at two points a.s.
Hence, for large $c$, $M_c$ will be contractible to a point (i.e., the maximizer of $\Xi$) or will be empty.
Therefore, we have
\[
 \chi(M_c) \approx \1(M_c\ne\emptyset) \quad \mbox{when $c$ is large},
\]
where $\chi(\cdot)$ is the Euler-Poincar\'e characteristic of a set.
By taking expectations on both sides,
we have $\E[\chi(M_c)] \approx \Pr(M_c\ne\emptyset)=P(c)$.
This approximation is known as the \textit{Euler characteristic heuristic}.

An alternative approach, independently proposed for approximating the probability $P(c)$ in (\ref{sup_Xi}),
is the \textit{tube method} (e.g., \cite{johnstone-siegmund:1989,johansen-johnstone:1990,sun:1993}).
In \cite{takemura-kuriki:2002}, it was shown that the tube method is equivalent to the Euler characteristic heuristic.
This method applies to random fields generated by finite-dimensional random vectors, such as $\Xi$ in $(\ref{Xi_u})$, and provides a framework for error analysis (e.g., \cite{kuriki-takemura:2001,kuriki-takemura:2008,kuriki-takemura:2009}).  
In this paper, we examine the approximation formula
\begin{equation}
\label{Ptube0}
 P_\mathrm{tube}(c):=\E[\chi(M_c)]\approx P(c)
\end{equation}
within the tube method.
 
It is widely known that the tube method or the Euler characteristic heuristic yield  a very accurate approximation for the upper tail probability (\ref{sup_Xi}) of the supremum of a smooth Gaussian random field such as $\Xi_u$ in (\ref{Xi_u}).
The \emph{relative approximation error}
\[
 \Delta(c) := \frac{P_\mathrm{tube}(c)-P(c)}{P_\mathrm{tube}(c)}
\]
with $c\to +\infty$ is exponentially small as a function of $c$, and its exponential decay rate is determined by the critical radius (reach) of the index set $M$ of the random field \cite{kuriki-takemura:2001,taylor-takemura-adler:2005}.
On the other hand, little is known about non-Gaussian random fields
\cite{adler-samorodnitsky-taylor:2010,adler-samorodnitsky-taylor:2013}.
We call the approximation $P_\mathrm{tube}$ \emph{tail-valid} if
$\Delta(c)\to 0$ as $c\to +\infty$, meaning that $P_\mathrm{tube}(c)$ and $P(c)$ are asymptotically equivalent.
The tail-validity would be a desirable property for the tail probability approximation.
We call the approximation $P_\mathrm{tube}$ \emph{not tail-valid} or \emph{tail-invalid} if
$\Delta(c)\not\rightarrow 0$ as $c\to +\infty$.

In this paper,
we examine the asymptotic behavior of $\Delta(c)$ for large $c$ in the case of several non-Gaussian marginal distributions of $\Xi$.
To this end, we allow for a wide class $\mathcal{L}_{*,*}$ of (heavy as well as light) tails of $R_n=\Vert\xi\Vert^2$
including subexponential (and hence also regularly varying) ones.
For instance, the supremum of a sub-Gaussian random field can be handled within this framework. 
We will show that the tube approximation is tail-valid except for the regularly varying case.
In the case of tail-validity, we observe  a variety of  convergence rates of $\Delta(c)$ to zero.

The paper starts with two motivating statistical applications (multiple testing and simultaneous inference in nonlinear regression) in Section \ref{sec:examples}.
 Section \ref{sec:pre} provides preliminaries on the class of random fields, regularity conditions, tube method and Euler characteristic heuristic.
In Section \ref{sec:heavy-tailed}, we introduce a class of tail probabilities $\mathcal{L}_{*,*}$ for the distribution of $R_n$.
Section \ref{sec:Main} contains the formulation of the main results about the asymptotics of the relative error $\Delta(c)$ as $c\to +\infty$.
We fully examine the case of a finite set $M$,
where the tube formula reduces to the Bonferroni approximation.
In some practical scenarios described in Section \ref{sec:examples},
$s^2$ in (\ref{Xi_u-again}) follows a scaled chi-square distribution, and consequently, $R_n$ follows a regularly varying $F$-distribution.
In such cases, the tube method does not approximate the probability $P(c)$ well.
Here, we propose an upper bound for the error $|\Delta(c)|$
 (Remarks \ref{rem:bound0} and \ref{rem:bound}).
The relationship between the relative error $\Delta(c)$ and the asymptotic (in)dependence in extreme value theory is discussed in Remark \ref{rem:copula}.
The results of numerical studies for the Bonferroni approximation follow in Section \ref{sec:num}.
Proofs of the main results are provided in Section \ref{sec:proof}.
The paper concludes with a short summary in Section \ref{sec:summary}.

\section{Motivating examples}
\label{sec:examples}

The random field (\ref{Xi_u-again}) appears whenever  a Gaussian random field is normalized by an estimator of its unknown variance.
We illustrate this with two examples.

The first example concerns the Bonferroni method for multiple testing.
There, the maximum test statistic
$T_{\max}=\max_{1\le i\le N} T_i$
often occurs, where $T_i$ are test statistics in $i$th test.
It is assumed that
$(T_i)_{1\le i\le N}$ are jointly Gaussian random variables with covariance structure $\Var(T_i)=\sigma^2$ and $\Corr(T_i,T_j)=\rho_{ij}$,
and under the null hypothesis, their means are $\E[T_i]=0$.
In a typical setting of linear models,
$\rho_{ij}$ is determined by the design matrix and hence known.
On the other hand, $\sigma^2$ is an unknown variance of measurement error.
When an estimator $\widehat\sigma$ of $\sigma$ stochastically independent of $T_i$'s is available, we redefine $T_i$ by $T_i/\widehat\sigma$ to remove the nuisance parameter $\sigma^2$.
One possible way to obtain such an estimator $\widehat\sigma$ is by the replication of measurements.
By choosing unit vectors $u_i\in\S^{n-1}$ so that $\langle u_i,u_j\rangle=\rho_{ij}$, it holds that
$(T_i)_{1\le i\le N}\eqd (\Xi_{u_i})_{1\le i\le N}$,
where $\Xi_{u}=\langle u,\zeta\rangle/s$ with $s=\widehat\sigma/\sigma$.
Here and in what follows, ``$\eqd$'' denotes the equality in distribution.
Hence,
\[
 T_{\max} \mathop{=}^d \max_{u\in M}\Xi_u, \quad M=\{u_i \mid 1\le i\le N\}\subset\S^{n-1},
\]
is the maximum of the random field $\Xi$ on $M$.
The tube method for approximating $P(c)=\Pr(T_{\max}\ge c)$ is nothing but the \emph{Bonferroni approximation}
$P_\mathrm{tube}(c) = \sum_{i=1}^N \Pr(T_i\ge c)$.
The Bonferroni approximation is often used because of its conservativeness $P(c)\le P_\mathrm{tube}(c)$ for all $c$.
However, its approximation error including its tail validity $P_\mathrm{tube}(c)\sim P(c)$ is not clear in general.

The second example is about the simultaneous confidence band in nonlinear (curvilinear) regression \cite{naiman:1986,lu-kuriki:2017}.
We consider the case where the measurement error variance $\sigma^2$ is unknown.
Suppose that the pairs $(x_i,y_i)$ of explanatory variables $x_i\in\mathcal{X}\subset\R^n$ and response variables $y_i\in\R$, $i=1,\ldots,N$, are observed.
We consider the regression model
\[
 y_i = b^\top f(x_i) + \varepsilon_i,\quad \varepsilon_i \sim \mathcal{N}\bigl(0,\sigma^2\bigr),\ \ i=1,\ldots,N, \quad\mbox{i.i.d.},
\]
where $b=(b_1,\ldots,b_n)^\top$ is an unknown coefficient vector, and
$f(x)=(f_1(x),\ldots,f_n(x))^\top$, $x\in\mathcal{X}\subset\R^n$, is a piecewise smooth regression basis vector.
The ordinary least square estimator (OLS) $\widehat b$ of $b$ is distributed as $\mathcal{N}_n(b,\sigma^2\Sigma)$,
where $\Sigma = \Bigl(\sum_{i=1}^N f(x_i) f(x_i)^\top\Bigr)^{-1}$.
We assume that $\sigma^2$ is unknown but its estimator $\widehat\sigma^2$ independent of $\widehat b$ is available.

The standard form of the $100(1-\gamma)\%$ simultaneous confidence band of hyperbolic-type is 
\begin{equation}
\label{hyperbolic}
 b^\top f(x) \in \widehat b^\top f(x) \pm c_\gamma \widehat\sigma \sqrt{f(x)^\top\Sigma f(x)},
\end{equation}
where $u\pm v$ denotes the region $(u-v,u+v)$.
The threshold $c_\gamma$ is determined so that the event (\ref{hyperbolic}) holds for all $x\in\mathcal{X}$ with given probability $1-\gamma$ \cite{liu:2010}.
That is,
\begin{equation}
\label{c_alpha}
 \gamma = \Pr\left(\sup_{x\in \mathcal{X}}\left|\frac{(\widehat b-b)^\top f(x)}{\widehat\sigma\sqrt{f(x)^\top\Sigma f(x)}}\right| \ge c_\gamma\right).
\end{equation}
This probability is equal to the right-hand side of (\ref{sup_Xi}) with $c=c_\gamma$ by setting
\[
 \zeta = \sigma^{-1}\Sigma^{-1/2}(\widehat b-b), \qquad
 M = \left\{\pm\frac{\Sigma^{1/2}f(x)}{\sqrt{f(x)^\top\Sigma f(x)}} \mid x\in\mathcal{X} \right\}, \qquad
 s := \widehat\sigma/\sigma.
\]
Although the tube method is used to approximate (\ref{c_alpha}) even when $\sigma^2$ is unknown (see, e.g., \cite{sun-loader:1994}),
this approximation is not yet well justified.

\section{Preliminaries}
\label{sec:pre}

In this section, we explain the notation used and state some regularity assumptions on the set $M$ together with the expressions for the exact tail probability (\ref{sup_Xi}) and the tube formula (\ref{Ptube0}).

\subsection{Notation}

Let $\bar\R=\R\cup \{\pm\infty\}$ be the extended real line and $\bar \R_+=(0,+\infty)\cup\{+\infty\}$.
For two functions $f,g:\R_+\to\R_+$, we write $f(y)\sim g(y)$ as $y\to +\infty$ iff $\lim_{y\to +\infty}f(y)/g(y)=1$, and  $f(y)\asymp g(y)$ as $y\to +\infty$ iff $\lim_{y\to +\infty}f(y)/g(y)\in (0,+\infty)$.
Let $\sharp A$ be the cardinality of a set $A$.

Let $r>0$ be a positive random variable, and let $\eta=(\eta_1,\ldots,\eta_n)\in\R^n$, $n\ge 2$, be a random vector uniformly distributed on the unit sphere $\S^{n-1}$.
We assume that $r$ and $\eta$ are stochastically independent.
The $n$-dimensional random vector $\xi=(\xi_1,\ldots,\xi_n) = r\eta$ is said to have a \emph{spherically contoured} distribution
(i.e., an elliptically contoured distribution with 
identity dispersion matrix).
For instance, if $r^2\sim\chi^2_n$ is $\chi^2$-distributed with $n$ degrees of freedom then $\xi\sim\mathcal{N}_n(0,I_n)$ is $n$-variate standard normal.
If $r^2\sim\nu\chi^2_n/\chi^2_\nu$ (the distribution of $\nu$ times the ratio of two independent chi-square random variables), $n,\nu\in\N$, then $\xi\sim t_\nu$ is distributed as the $n$-variate Student distribution with $\nu$ degrees of freedom.

Using $r$ and $\eta$, the spherically contoured random field was defined by (\ref{Xi_u}).
Note that for independent $\zeta\sim \mathcal{N}_n(0,I_n)$ and $s$, $\zeta/s$ has a spherically contoured distribution 
due to  the decomposition
\begin{equation*}
 \xi = \zeta/s = r \eta \quad\mbox{with}\ \ %
 r = \Vert\zeta\Vert/s, \quad
 \eta = \zeta/\Vert\zeta\Vert,
\end{equation*}
where $r$ and $\eta\sim\mathrm{Unif}(\S^{n-1})$ are independent.
Therefore, $\Xi_u$ defined in (\ref{Xi_u-again}) is a spherically contoured random field.
Introduce the notation $R_k=\sum_{i=1}^k \xi_i^2$ and $\bar R_{n-k}=R_n - R_k$, $k=1,\ldots,n$.
Notice that $R_n=r^2=\Vert\xi\Vert^2$.

Let $B(x;p,q)=\int\limits_0^x y^{p-1} (1-y)^{q-1}\,\dd y$,
$x\in[0,1]$ be the incomplete Beta function, where $p,\,q>0$
and $B(p,q)=B(1;p,q)$ is the usual Beta function.
Denote by $B_{p,q}$ the Beta distribution with parameters $p,q>0$.

\subsection{Critical radius and regularity conditions}

To state the regularity conditions on the domain $M\subset\S^{n-1}$, we first define the local critical radius (local reach) $\theta(x,v)$ of $M$ at a point $x$ in normal direction $v$.
For $x\in M\subset\S^{n-1}$, let $T_x\S^{n-1}$ be the tangent space of $\S^{n-1}$ at $x$, that is, the set of vectors orthogonal to $x$.
Let $N_x M$ be the normal cone of $M$ at $x$ restricted to $T_x\S^{n-1}$.
We set $N_xM=\{0\}$ for all interior points $x$ of $M$.
Let $S(N_x M)$ be the set of unit vectors in $N_x M$.
A point $y\in M^c\cap\S^{n-1}$ is called \emph{projection unique} if the nearest point (in the geodesic distance) $x\in M$ from $y$ is uniquely determined.

For $x\in M$ and $v\in S(N_x M)$,
the exponential map $S(N_x M)\to M$ is denoted by
\[
 v \mapsto \exp_x(\theta v), \ \ \theta\ge 0,
\]
where
\[
 \exp_x(\theta v) = x \cos\theta + v \sin\theta.
\]
Then, define
\[
 \theta(x,v) := \sup \{\theta\ge 0 \mid \exp_x(\theta v) \mbox{ is projection unique} \}.
\]
An explicit formula for $\theta(x,v)$ is given in \cite[Section\ 2.3]{kuriki-takemura:2009} as
\[
 \cot\theta(x,v) = \sup_{y\in M\setminus\{x\}}\frac{\langle v,y\rangle}{1-\langle x,y\rangle}.
\]
Its derivation for a finite set $M$ is given in Section \ref{sec:bonferroni}.
Informally speaking, $\theta(x,v)$ is the minimal arc length on the sphere such that all points closer to $x\in \partial M$ in normal direction $v$ than $\theta(x,v)$ have a unique metric projection $x$ onto the boundary $\partial M$ of $M$.

Then, the \emph{critical radius} or \emph{reach} is the infimum of all possible $\theta(x,v)$:
\[
 \theta_* = \inf_{x\in M}\inf_{v\in S(N_x M)}\theta(x,v).
\]
As we will see, $\theta_*$ characterizes the asymptotic behavior of $\Delta(c)$.

We say that $M$ is a \emph{stratified $C^2$-manifold of dimension} $m\le n-1$ if
$M=\bigsqcup_{d=0}^m \partial M_d$, where $\partial M_d$ is a $d$-dimensional $C^2$-manifold (without boundary) and $\bigsqcup$ denotes the disjoint union of sets.

Let $H_d(x,v)$ be the second fundamental form of $\partial M_d$ at a point $x\in\partial M_d$ with normal direction $v\in S(N_x M)$ with respect to an orthonormal basis.
There is a sign ambiguity in the definition of the second fundamental form.
In this paper, $H_d(x,v)$ is defined to be nonnegative definite if $M$ is geodesically convex. 

In the sequel, we impose the following
\begin{assumption}
\label{as:M}
\begin{enumerate}[label={\rm (\roman*)}]

\item
$M$ is a stratified $C^2$-manifold of dimension $m\le n-1$.

\item
$M$ is of positive reach, that is, $\theta_*>0$.

\end{enumerate}
\end{assumption}

\subsection{Exact tail probability and tube formula}

Following \cite{takemura-kuriki:2003}, let us introduce the fibre space
\[
 \mathcal{X}_d := \bigsqcup_{y\in\partial M_d}\Bigl(\{y\}\times S(N_y M)\Bigr), \quad 0\le d\le m,\quad\mbox{and}\quad \mathcal{X} := \bigsqcup_{d=0}^m \mathcal{X}_d,
\]
where $\partial M_d$ is the boundary of the $d$-dimensional component of $M$.
We also define
\[
 w_{k}(x,v) := \frac{1}{\Omega_{k}\Omega_{n-k}}\sum_{d=k-1}^m \1_{\{x\in\partial M_d\}}\tr_{d-k+1} H_d(x,v), \quad 1\le k\le m+1,
\]
where $\1_{\{\cdot\}}$ is the indicator function, 
$\tr_j H_d(x,v)$ is the $j$th elementary symmetric function of the eigenvalues of $H_d$,
and $\Omega_k = 2\pi^{k/2}/\Gamma(k/2)$ is the surface area of the unit sphere $\S^{k-1}$.
Let $\dd x$ be the surface area element of $\partial M_d$, and $\dd v$ be the surface area element of $S(N_x M)$.
Throughout the paper, we require an additional assumption.
\begin{assumption}
Functions $w_k(x,v)$ are absolutely integrable on $\mathcal{X}$ with respect to the measure $\dd x\,\dd v$.
\end{assumption}
It is satisfied, for example, if all eigenvalues of $H_d$ are uniformly bounded on $\mathcal{X}$.
Recall that
\[
 M_c = \{u\in M\mid \Xi_u=\langle u,\xi\rangle \ge c\}
\]
is the excursion set of the spherically contoured random field $\Xi_u$ on $u\in M$ at a level $c>0$.
It is obviously a random closed set.
Recall also that
\[
 P(c) = \Pr\biggl(\sup_{u\in M}\Xi_u \ge c\biggr) \quad\mbox{and}\quad
 P_\mathrm{tube}(c) = \E[\chi(M_c)].
\]
The relation \eqref{tube} below can be proven similarly to \cite[Proposition 3.2]{takemura-kuriki:2003}, while relation \eqref{max} can be found in \cite[Theorem 2.2]{takemura-kuriki:2002}:
\begin{proposition}
Let $\Xi$ be the random field defined in (\ref{Xi_u}) or (\ref{Xi_u-again}).
Assume that the set $M$ satisfies Assumption \ref{as:M}.
Then, for any $c>0$, it holds
\begin{align}
 P_\mathrm{tube}(c) =& \sum_{k=1}^{m+1} \int_{\mathcal{X}} w_{k}(x,v)\, \dd x\, \dd v \cdot \Pr\bigl(R_{k}\ge c^2\bigr),
\label{tube}
\\
 P(c) =& \sum_{k=1}^{m+1} \int_{\mathcal{X}} w_{k}(x,v)\,\Pr\bigl(R_{k}\ge c^2, \bar R_{n-k}\le t(x,v) R_{k}\bigr)\, \dd x\, \dd v,
\label{max}
\end{align}
where $t(x,v)=\tan^2\theta(x,v)>0$, $(x,v)\in \mathcal{X}$.
\end{proposition}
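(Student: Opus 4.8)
The plan is to condition on the radius $r=\sqrt{R_n}$ and reduce both identities to a deterministic volume computation on the sphere for the uniform direction $\eta$. Writing $\xi=r\eta$ and using that for unit vectors $\langle u,\eta\rangle$ is the cosine of the geodesic angle, one has the key identity $\sup_{u\in M}\langle u,\eta\rangle=\cos\bigl(d(\eta,M)\bigr)$, where $d$ is the geodesic distance on $\S^{n-1}$. Hence, conditionally on $r$ with $c\le r$, the excursion event $\{\sup_{u\in M}\Xi_u\ge c\}$ coincides with $\{\eta\in\mathrm{Tube}(M,\phi)\}$, a geodesic tube of radius $\phi=\arccos(c/r)$, and $\Pr(M_c\ne\emptyset\mid r)=\mathrm{vol}(\mathrm{Tube}(M,\phi))/\Omega_n$ since $\eta$ is uniform. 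Both \eqref{max} and \eqref{tube} then emerge by computing this tube volume, integrating against the law of $r$, and re-expressing the resulting trigonometric integrals as tail probabilities of $R_k$.

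For \eqref{max} I would parametrize the tube by the exponential map $\Phi:(x,v,\theta)\mapsto x\cos\theta+v\sin\theta$ over the normal bundle $\mathcal{X}$. By the definition of the local critical radius, $\Phi$ restricted to $\{\theta<\theta(x,v)\}$ is injective onto the points whose nearest point in $M$ is unique and lies on the stratum through $x$; since $M$ has positive reach, the non-unique-projection set has zero surface measure, so this parametrizes $\mathrm{Tube}(M,\phi)\setminus M$ up to a null set, with radial cutoff $\theta\le\min(\phi,\theta(x,v))$. A Jacobian computation on the unit sphere (a Jacobi-field/Weingarten calculation for the stratum $\partial M_d$) gives the volume element $\sin^{n-d-2}\theta\,\det\bigl(\cos\theta\,I_d+\sin\theta\,H_d(x,v)\bigr)\,\dd\theta\,\dd v\,\dd x$. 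Expanding the determinant as $\sum_j\tr_j H_d\,\cos^{d-j}\theta\sin^j\theta$ and substituting $k=d-j+1$, every term carries the universal factor $\cos^{k-1}\theta\,\sin^{n-k-1}\theta$, so the $\theta$-integral becomes $\tfrac12 B\bigl(\sin^2(\min(\phi,\theta(x,v)));\tfrac{n-k}2,\tfrac k2\bigr)$, an incomplete Beta function.

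The remaining step converts these Beta integrals into the stated probabilities. Since $\eta$ is uniform, $B:=R_k/R_n=\sum_{i\le k}\eta_i^2\sim B_{k/2,(n-k)/2}$ independently of $R_n$, and the ratio $\Omega_k\Omega_{n-k}/\Omega_n=2/B(k/2,(n-k)/2)$ turns $\tfrac1{\Omega_n}\cdot\tfrac12 B(s^*;\tfrac{n-k}2,\tfrac k2)$ into $\tfrac1{\Omega_k\Omega_{n-k}}\Pr(1-B\le s^*\mid R_n)$, where $s^*=\sin^2(\min(\phi,\theta(x,v)))$. Using $\sin^2\phi=(R_n-c^2)/R_n$ and $t(x,v)=\tan^2\theta(x,v)$ one checks the identity of events $\{1-B\le s^*\}=\{R_k\ge c^2\}\cap\{\bar R_{n-k}\le t(x,v)R_k\}$. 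Collecting the curvature factors into $w_k(x,v)$ and taking the expectation over $R_n$ (Fubini, justified by Assumption 2) then yields \eqref{max}. Formula \eqref{tube} is the same computation with the cutoff removed, i.e. integrating $\theta$ up to $\phi$ (formally $\theta(x,v)\equiv\pi/2$, $t\to\infty$); this drops the constraint $\bar R_{n-k}\le t R_k$ and leaves $\Pr(R_k\ge c^2)$. That this uncut tube polynomial equals $\E[\chi(M_c)]$ is the equivalence of the tube method and the Euler characteristic heuristic, which I would supply by the Morse/Gauss--Bonnet computation of the EC density on the stratified manifold as in \cite[Prop.\ 3.2]{takemura-kuriki:2003}, producing the identical coefficients $w_k$.

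The hard part will be the geometric bookkeeping rather than the probabilistic conversion: showing that the cutoff at $\theta(x,v)$ reproduces the exact volume requires the positive-reach structure (the cut locus and multiply-covered points are negligible, and $\Phi$ has positive Jacobian precisely for $\theta<\theta(x,v)$, since $\cot\theta(x,v)$ dominates the eigenvalues of $H_d$ by the explicit formula), and the sign convention for $H_d$ must be fixed so that the determinant expands with nonnegative-definite $H_d$ in the geodesically convex case. The stratified setting also needs care where strata meet and at the top stratum $d=n-1$ (the $\mathrm{vol}(M)/\Omega_n$ contribution, handled by the convention for the empty normal sphere). Once these geometric points are in place, the passage to $R_k$ and the final assembly are routine.
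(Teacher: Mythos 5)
Your proposal is correct and follows essentially the same route as the paper, which itself defers to \cite[Proposition 3.2]{takemura-kuriki:2003} and \cite[Theorem 2.2]{takemura-kuriki:2002}: condition on $r$, identify the excursion event with $\eta$ falling in a geodesic tube of radius $\arccos(c/r)$, compute the tube volume by the Weyl expansion with the radial cutoff at the local critical radius $\theta(x,v)$, and convert the resulting incomplete Beta integrals into the joint probabilities for $(R_k,\bar R_{n-k})$ via the independence of $R_k/R_n\sim B_{k/2,(n-k)/2}$ from $R_n$. The geometric bookkeeping you flag (null cut locus, Jacobian positivity below $\theta(x,v)$, the Morse-theoretic identification of the uncut polynomial with $\E[\chi(M_c)]$) is exactly what the cited references supply, so nothing essential is missing.
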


\subsection{Bonferroni approximation}
\label{sec:bonferroni}

As a simplest example, consider the case of a finite set $M$
which appears in the multiple testing problem introduced in Section \ref{sec:examples}.
Let $M=\{u_1,\ldots,u_N\}$ be a finite set of $N$ points on $\S^{n-1}$.
Let $T_i$ be the $i$th test statistic, $i=1,\ldots, N$.
Under the null hypothesis $\E[T_i]\equiv 0$, the $T_i$ are stochastically represented as $\langle u_i,\xi\rangle$,
where $\xi$ is a spherically contoured random vector.

Without loss of generality, fix $u_1=e_1=(1,0,\ldots,0)$.
Then, noting that $w_{1}(x,v)=1/(\Omega_1\Omega_{n-1})$ and $H_0(x,v)=1$ for $x\in M$, $v\in \S^{n-2}$ with $m=0$, we write
\begin{equation*}
 P_\mathrm{tube}(c)
 = \frac{1}{\Omega_1\Omega_{n-1}} \sum_{x\in M}\int_{\S^{n-2}} \Pr\bigl(R_{1}\ge c^2 \bigr)\,\dd v
 = \frac{1}{2} \sum_{x\in M} \Pr\bigl(R_{1}\ge c^2\bigr)
 = N \Pr(\xi_1 \ge c)
\end{equation*}
as the first order Bonferroni approximation.

The exact probability of the maximum can be evaluated directly as
\begin{align*}
 P(c)
 =& \Pr\biggl(\max_{i=1,\ldots,N}\langle u_i,\xi\rangle \ge c\biggr) \\
 =& \sum_{i=1}^N \Pr\Bigl(\langle u_i,\xi\rangle \ge c,\ \langle u_i,\xi\rangle\ge \langle u_j,\xi\rangle,\,\forall j\ne i\Bigr) \\
 =& \sum_{i=1}^N \Pr\biggl(\langle u_i,\xi\rangle \ge c,\ \langle u_i,\xi\rangle \ge \frac{\langle u_j,\xi\rangle-\langle u_i,u_j\rangle\langle u_i,\xi\rangle}{1-\langle u_i,u_j\rangle},\,\forall j\ne i\biggr).
\end{align*}

Let us examine the first term with $i=1$.
Recall that $\langle u_1,\xi\rangle=\xi_1$.
Put
\[
 v := \frac{(0,\xi_2,\ldots,\xi_n)}{\sqrt{\bar R_{n-1}}}, \quad
 \bar R_{n-1}=\xi_2^2+\cdots+\xi_n^2.
\]
Then $v$ is a unit random vector in the normal space $N_{e_1}M=\{v \mid \langle e_1,v\rangle =0\}$ at $e_1$, and
$\xi$ can be represented as
$\xi = \xi_1 e_1 + \sqrt{\bar R_{n-1}}v$.
Taking this into account, we have
\begin{align*}
 \Pr & \biggl(\langle u_1,\xi\rangle \ge c,\ \langle u_1,\xi\rangle\ge \frac{\langle u_j,\xi\rangle-\langle u_1,u_j\rangle\langle u_1,\xi\rangle}{1-\langle u_1,u_j\rangle},\,\forall j>1 \biggr) \\
 &= \Pr\biggl(\xi_1 \ge c,\ \xi_1\ge \frac{\langle u_j,\sqrt{\bar R_{n-1}}v\rangle}{1-\langle e_1,u_j\rangle},\,\forall j>1 \biggr) \\
 &= \Pr\biggl(\xi_1 \ge c,\ \xi_1\ge \sqrt{\bar R_{n-1}}\max_{j>1}\frac{\langle u_j,v\rangle}{1-\langle e_1,u_j\rangle}\biggr) \\
 &= \frac{1}{2}\,\Pr\bigl(R_1 \ge c^2, R_1\ge \bar R_{n-1}\tan^{-2}\theta(u_1,v)\bigr),
\end{align*}
where 
\[
 \tan\theta(u_i,v) := \biggl(\max_{j\ne i}\frac{\langle u_j,v\rangle}{1-\langle u_i,u_j\rangle}\biggr)^{-1},\quad v\in S(N_{u_i}M), \ \ i=1,\ldots, N.
\]
This leads to
\begin{equation*}
 P(c)
 = \frac{1}{2\Omega_{n-1}} \sum_{u\in M}\int_{\S^{n-2}} \Pr\bigl(R_{1}\ge c^2, \bar R_{n-1}\le \tan^{2}\theta(u,v) R_{1}\bigr)\, \dd v,
\end{equation*}
which corresponds to relation \eqref{max}.
As will be shown in Section \ref{subsec:Bonferr},
$P_\mathrm{tube}(c)$ and $P(c)$ are asymptotically equivalent for $c\to +\infty$
except for the case of a regularly varying tail.
Noting that $\max_{v\in T_{u_i}\S^{n-1}}\langle v,u_j \rangle = \sqrt{1-\langle u_j,u_i\rangle^2}$, we have
\begin{align*}
 \tan\theta_* & =
 \min_i \min_{v\in S(N_{u_i}M)} \tan\theta(u_i,v)
 = \left(\max_i \max_{j\ne i}\frac{\sqrt{1-\langle u_j,u_i\rangle^2}}{1-\langle u_i,u_j\rangle}\right)^{-1} \\
 & = \min_{i\ne j} \sqrt{\frac{1-\langle u_i,u_j\rangle}{1+\langle u_i,u_j\rangle}}
  = \sqrt{\frac{1-\rho_*}{1+\rho_*}}, \qquad \rho_*=\max_{i\neq j}\rho_{ij},
\end{align*}
\begin{equation}
\label{theta*}
 \cos\theta_* = \sqrt{\frac{1+\rho_*}{2}}.
\end{equation}

\begin{remark}
\label{rem:voronoi}
The great circle distance between $u,v\in\S^{n-1}$
is given by $\cos^{-1}\langle u,v\rangle$.
The spherical Voronoi diagram generated by the points $\{u_i\}_{1\le i\le N}$ is the family of Voronoi cells
\[
 D_i=\{u\in\S^{n-1}\mid \langle u,u_i\rangle\ge \langle u,u_j\rangle, \ \forall j\ne i\}, \ \ i=1,\ldots,N.
\]
Let $\exp_u(\psi v)$, $\psi\ge 0$, be the exponential map at $u$.
Then, $\theta(u_1,v)=\cos^{-1}\langle u_1,u'\rangle$,
where $u'$ is the crossing point of the orbit $\{\exp_{u_1}(\psi v) \mid \psi\ge 0\}$ and the cell boundary of $D_1$
(see Figure \ref{fig:voronoi}, left).
If $u_2$ is the nearest neighbor of $u_1$, the minimum of the function $v\mapsto\theta(u_1,v)$ is attained at $v=(u_2-u_1)/\Vert u_2-u_1\Vert$, and
\[
 \min_v\theta(u_1,v) = \cos^{-1}\langle u_1,u''\rangle = \cos^{-1}\langle u_2,u''\rangle =
 \cos^{-1}\sqrt{\frac{1+\langle u_1,u_2\rangle}{2}}
\]
with $u''=(u_1+u_2)/\Vert u_1+u_2\Vert$ (see Figure \ref{fig:voronoi}, right).
\end{remark}

\begin{figure}[ht]
\begin{center}
\begin{tabular}{ccc}
\scalebox{0.60}{\includegraphics{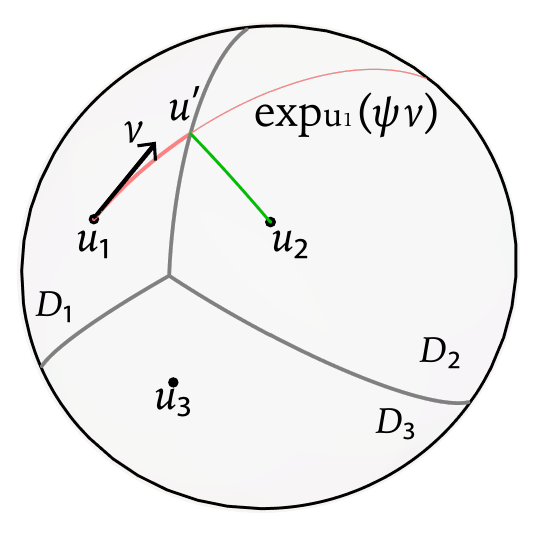}} &&
\scalebox{0.60}{\includegraphics{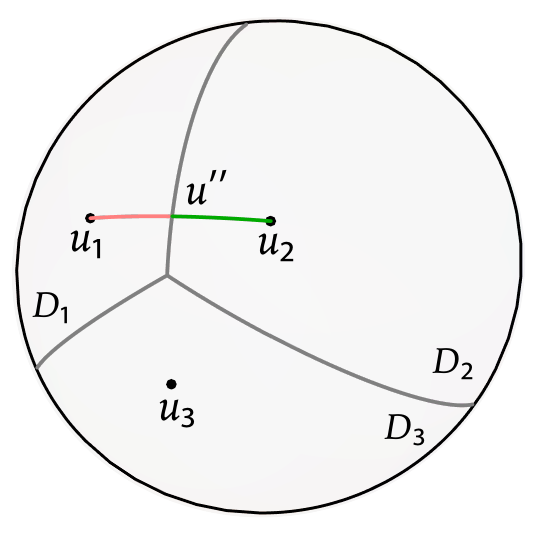}}
\end{tabular}
\caption{Voronoi cells $D_i$ on the unit sphere, distance $\theta(u,v)$ and its minimum $\min_v\theta(u,v)$.}
Left: $\theta(u_1,v)=\cos^{-1}\langle u_1,u'\rangle$.
Right: $\min_v\theta(u_1,v)$ is attained at $v=\frac{u_2-u_1}{\Vert u_2-u_1\Vert}$.
\label{fig:voronoi}
\end{center}
\end{figure}

\section{Classes of heavy-tailed distributions}
\label{sec:heavy-tailed}

In the sequel, we assume that the distribution of the random variable $R_n=r^2=\Vert\xi\Vert^2$ may exhibit different types of tail behavior.
For this purpose, we introduce a number of distribution classes, particularly heavy-tailed ones
\cite{goldie-kluppelberg:1998,foss-etal:2013}.

Let $F:\R_+\to [0,1]$ be a cumulative distribution function
with $F(x)<1$ for all $x>0$, and let $\bar F:=1-F$ be its tail distribution function.
\begin{definition}
\label{def:H}
$F$ is called heavy-tailed, or $F\in\mathcal{H}$, whenever
$\int_0^{+\infty} e^{\lambda x}\dd F(x) = \infty$ for all $\lambda>0$.
This condition is equivalent to
$\limsup_{x\to +\infty} e^{\lambda x}\bar F(x) = +\infty$ for all $\lambda>0$.
$F$ is called light-tailed, or $F\in\mathcal{H}^c$, if
there exists a $\lambda>0$ such that
$ \int_0^{+\infty} e^{\lambda x}\dd F(x) < +\infty$.
\end{definition}
\begin{definition}
$F$ is called long-tailed, or $F\in\mathcal{L}$, if 
\begin{equation*}
 \lim_{x\to +\infty} \frac{\bar F(x-y)}{\bar F(x)} = 1, \quad y>0.
\end{equation*}
\end{definition}
\begin{definition}
$F$ is called subexponential, or $F\in\mathcal{S}$, if
$\overline{F*F}(x) \sim 2 \bar F(x)$ as ${x\to +\infty}$,
where $F*F$ is the convolution of $F$ with itself.
\end{definition}
\begin{definition}
$F$ is called regularly varying with index $-\gamma >-\infty$, or $F\in\mathrm{RV}_{-\gamma}$, if and only if
\begin{equation*}
 \lim_{x\to +\infty} \frac{\bar F(\lambda x)}{\bar F(x)} = \frac{1}{\lambda^\gamma}\quad\mbox{for all }\ \lambda>0.
\end{equation*}
$F\in\mathrm{RV}_0$ is called \emph{slowly varying}.
Let $\mathrm{RV}=\bigcup_{\gamma\ge 0}\mathrm{RV}_{-\gamma}$.
\end{definition}

\begin{definition}
$F$ belongs to the class $\mathcal{S}^*$ if and only if the mean
$\mu=\int_0^{+\infty} \bar F(x) \dd x<+\infty$ exists and
\begin{equation*}
 \lim_{x\to +\infty} \int_0^x \frac{\bar F(x-y)}{\bar F(x)} \bar F(y)\,\dd y= 2 \mu.
\end{equation*}
\end{definition}
Many known distributions belong to the class $\mathcal{S}^*$ as long as their mean exists, for instance, the Pareto, log-normal, log-gamma, and Weibull distributions.

\begin{proposition}
It holds that
$\mathrm{RV}_{-\gamma} \subset \mathcal{S} \subset \mathcal{L} \subset \mathcal{H}$
and $\mathcal{S}^*\subset\mathcal{S}$.
\end{proposition}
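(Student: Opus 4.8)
The plan is to prove the four inclusions separately, proceeding from the easiest to the most delicate. I begin with $\mathcal{L}\subset\mathcal{H}$, which is elementary. Taking $y=1$ in the definition of $\mathcal{L}$ and passing to logarithms, the sequence $a_{k}:=-\log\bar F(k)$ satisfies $a_{k}-a_{k-1}\to 0$, so $a_{k}/k\to 0$ by Ces\`aro averaging, i.e.\ $-\log\bar F(k)=o(k)$. Hence for every $\lambda>0$ we have $e^{\lambda k}\bar F(k)=e^{\lambda k-o(k)}\to+\infty$, and the monotonicity of $\bar F$ upgrades this to $\limsup_{x\to+\infty}e^{\lambda x}\bar F(x)=+\infty$, which is precisely $F\in\mathcal{H}$.

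The crux is $\mathcal{S}\subset\mathcal{L}$, which I expect to be the main obstacle. Conditioning $\Pr(X_{1}+X_{2}>x)$ on $X_{2}$, where $X_{1},X_{2}$ are i.i.d.\ with law $F$, gives the representation
\[
 \overline{F*F}(x)=\bar F(x)+\int_{0}^{x}\bar F(x-t)\,\dd F(t),
\]
so that subexponentiality reads $\int_{0}^{x}\bar F(x-t)/\bar F(x)\,\dd F(t)\to 1$. Fixing $y>0$, the inequality $\bar F(x-y)\ge\bar F(x)$ yields $\liminf_{x}\bar F(x-y)/\bar F(x)\ge 1$ for free. The difficulty is the reverse bound: restricting the integral to $t>y$ and using $\bar F(x-t)\ge\bar F(x-y)$ there gives only the weak estimate $\limsup_{x}\bar F(x-y)/\bar F(x)\le 1/\bar F(y)$, and a naive contradiction argument fails because the event that would witness a ratio $L>1$ (both summands landing in $(x-y,x]$) contributes only at order $\bar F(x)^{2}=o(\bar F(x))$ and is therefore invisible to a single use of the defining relation. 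Sharpening the $\limsup$ to $1$ is the genuinely delicate point, and I would resolve it by reproducing the classical two-sided estimate of Chistyakov (see \cite{foss-etal:2013}), which exploits the exact subexponential constant $2$ to force $\bar F(x-y)\sim\bar F(x)$.

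For $\mathrm{RV}_{-\gamma}\subset\mathcal{S}$ I would first note $\mathrm{RV}_{-\gamma}\subset\mathcal{L}$: writing $\bar F(x-y)/\bar F(x)=\bar F\bigl(x(1-y/x)\bigr)/\bar F(x)$ and invoking the uniform convergence theorem for regularly varying functions with $1-y/x\to 1$ gives the ratio $\to 1$. Then I would use the symmetric ``one big jump'' decomposition
\[
 \overline{F*F}(x)=2\int_{0}^{x/2}\bar F(x-t)\,\dd F(t)+\bar F(x/2)^{2},
\]
which follows from the identity above by splitting according to whether both summands exceed $x/2$ or at most one does. Dividing by $\bar F(x)$, the term $\bar F(x/2)^{2}/\bar F(x)=\bar F(x)\bigl(\bar F(x/2)/\bar F(x)\bigr)^{2}\to 0$, since $\bar F(x/2)/\bar F(x)\to 2^{\gamma}$ while $\bar F(x)\to 0$; on $[0,x/2]$ the integrand $\bar F(x-t)/\bar F(x)$ tends to $1$ pointwise and is bounded by $\bar F(x/2)/\bar F(x)$, hence eventually by a constant, so dominated convergence against the probability measure $\dd F$ gives $\int_{0}^{x/2}\bar F(x-t)/\bar F(x)\,\dd F(t)\to 1$. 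Therefore $\overline{F*F}(x)/\bar F(x)\to 2$, that is $F\in\mathcal{S}$.

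Finally, for $\mathcal{S}^{*}\subset\mathcal{S}$ I would again begin by extracting long-tailedness from the defining integral $\int_{0}^{x}\bar F(x-y)\bar F(y)/\bar F(x)\,\dd y\to 2\mu$, and then bound the convolution defect $\overline{F*F}(x)-2\bar F(x)$ in terms of the $\mathcal{S}^{*}$-integral, using $\mu<+\infty$ together with the long-tail property just obtained; this is Kl\"uppelberg's comparison (see \cite{goldie-kluppelberg:1998}), and it yields $\overline{F*F}(x)\sim 2\bar F(x)$. I note that this route cannot absorb the regularly varying case, since $\mathrm{RV}_{-\gamma}$ need not have a finite mean when $\gamma\le 1$; that is why $\mathrm{RV}_{-\gamma}\subset\mathcal{S}$ must be argued directly as above.
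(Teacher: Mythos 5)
Your argument is correct, but it is genuinely more self-contained than the paper's proof, which consists entirely of citations (Chistyakov for $\mathcal{S}\subset\mathcal{L}\subset\mathcal{H}$, Goldie--Kl\"uppelberg for $\mathcal{S}^*\subset\mathcal{S}$ and $\mathrm{RV}_{-\gamma}\subset\mathcal{S}$). Two of your four steps are complete elementary proofs: the Ces\`aro argument for $\mathcal{L}\subset\mathcal{H}$ (taking $y=1$, so $-\log\bar F(k)=o(k)$ and $e^{\lambda k}\bar F(k)\to+\infty$) is sound, and your proof of $\mathrm{RV}_{-\gamma}\subset\mathcal{S}$ via the symmetric decomposition $\overline{F*F}(x)=2\int_0^{x/2}\bar F(x-t)\,\dd F(t)+\bar F(x/2)^2$ is correct and complete: the cross term vanishes because $\bar F(x/2)^2/\bar F(x)=\bar F(x)\bigl(\bar F(x/2)/\bar F(x)\bigr)^2\to 0\cdot 4^{\gamma}$, and dominated convergence applies on $[0,x/2]$ since the integrand is eventually bounded by a constant close to $2^{\gamma}$ there. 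For the two remaining inclusions you correctly diagnose where the difficulty lies --- in particular, your observation that the naive restriction argument only yields $\limsup_x\bar F(x-y)/\bar F(x)\le 1/\bar F(y)$ and that the one-big-jump event contributes only at order $\bar F(x)^2$ is exactly why $\mathcal{S}\subset\mathcal{L}$ is nontrivial --- and you then defer to the same classical references the paper invokes. The net effect is that your write-up proves more from scratch than the paper does while relying on the literature for precisely the two inclusions whose proofs are genuinely technical; this is a legitimate and arguably more informative presentation, at the cost of length.
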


\begin{proof}
See \cite[Theorem 2]{cistjakov:1964} for $\mathcal{S} \subset \mathcal{L} \subset \mathcal{H}$ as well as
\cite{goldie-kluppelberg:1998} for $\mathcal{S}^*\subset\mathcal{S}$ and
$\mathrm{RV}_{-\gamma} \subset \mathcal{S}$.
\end{proof}

The following fact is shown in \cite[Lemma 3.4]{goldie-kluppelberg:1998}:
\begin{proposition}
\label{prop:L}
A distribution function $F$ is tong-tailed, or $F\in\mathcal{L}$, if and only if $\bar F$ admits a representation
\begin{equation}
\label{eq:FL}
 \bar F(x)=C(x) \exp\left\{-\int_{x_0}^x q(t)\, \dd t\right\}, \quad x\ge x_0,
\end{equation}
for some $x_0\ge 0$, functions $C,\,q: \R_+\to \R_+$ such that $C(x)\to C_0>0$, $q(x)\to 0$ as $x\to +\infty$, $\int_{x_0}^{+\infty} q(t)\,\dd t=+\infty$.
\end{proposition}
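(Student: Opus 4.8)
The plan is to reduce both implications to an additive statement about the logarithmic transform $\psi(x):=-\log\bar F(x)$. Since $F(x)<1$ gives $\bar F(x)>0$, and $\bar F(x)\to 0$ because $F$ is a distribution function, the function $\psi$ is finite, nondecreasing, and tends to $+\infty$. The key observation is that, because $\bar F(x+y)/\bar F(x)=\exp\{-(\psi(x+y)-\psi(x))\}$, we have $F\in\mathcal{L}$ if and only if $\psi(x+y)-\psi(x)\to 0$ as $x\to+\infty$ for every $y>0$. Everything then follows by manipulating $\psi$.

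For the easy direction ($\Leftarrow$), I would substitute the representation into the defining ratio and obtain, for $x$ large enough that $x-y\ge x_0$,
\[
\frac{\bar F(x-y)}{\bar F(x)} = \frac{C(x-y)}{C(x)}\exp\Bigl\{\int_{x-y}^x q(t)\,\dd t\Bigr\}.
\]
Here $C(x-y)/C(x)\to C_0/C_0=1$, while $0\le \int_{x-y}^x q(t)\,\dd t \le y\sup_{t\ge x-y}q(t)\to 0$ since $q(t)\to 0$. Hence the ratio tends to $1$ and $F\in\mathcal{L}$.

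For the nontrivial direction ($\Rightarrow$), the idea is to produce $q$ by smoothing $\psi$. I would set
\[
Q(x):=\int_x^{x+1}\psi(t)\,\dd t, \qquad q(x):=\psi(x+1)-\psi(x)\ge 0.
\]
Since $\psi$ is monotone and locally bounded, $Q$ is locally Lipschitz with $Q'(x)=q(x)$ for a.e.\ $x$, so $\int_{x_0}^x q(t)\,\dd t = Q(x)-Q(x_0)$ by the fundamental theorem of calculus for absolutely continuous functions. It remains to verify three facts: (a) $q(x)=\psi(x+1)-\psi(x)\to 0$, which is exactly the long-tail property with $y=1$; (b) $\int_{x_0}^{+\infty}q=+\infty$, because $Q(x)\ge\psi(x)\to+\infty$ by monotonicity; and (c) defining $C(x):=\bar F(x)\exp\{Q(x)-Q(x_0)\}=\exp\{(Q(x)-\psi(x))-Q(x_0)\}$, the limit $C(x)\to e^{-Q(x_0)}=:C_0>0$ holds. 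Fact (c) follows from the identity $Q(x)-\psi(x)=\int_0^1(\psi(x+s)-\psi(x))\,\dd s$, whose integrand is nonnegative and dominated by $q(x)\to 0$. By construction $\bar F(x)=C(x)\exp\{-\int_{x_0}^x q\}$, as required.

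The main obstacle is mostly bookkeeping rather than depth: one must pass the pointwise limit $\psi(x+s)-\psi(x)\to 0$ through the integral over $s\in[0,1]$ without assuming uniform convergence. Monotonicity of $\psi$ resolves this, since it provides the uniform-in-$s$ domination $0\le\psi(x+s)-\psi(x)\le\psi(x+1)-\psi(x)=q(x)$. The only other delicate point is the a.e.\ differentiability of the smoothing $Q$ and the validity of the fundamental theorem of calculus there, which is standard for locally Lipschitz functions.
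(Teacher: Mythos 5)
Your proof is correct, but it is worth noting that the paper does not actually prove this proposition at all: it simply cites \cite[Lemma 3.4]{goldie-kluppelberg:1998}, so any comparison is with the literature rather than with an in-paper argument. What you have written is essentially the standard representation-theorem proof for the class $\mathcal{L}$ (the same smoothing device that underlies Karamata's representation of slowly varying functions): pass to $\psi=-\log\bar F$, observe that $F\in\mathcal{L}$ is the additive statement $\psi(x+y)-\psi(x)\to 0$, and manufacture $q$ as the derivative of the mollification $Q(x)=\int_x^{x+1}\psi$. All the delicate points are handled correctly: the sufficiency direction is a two-line substitution; in the necessity direction, monotonicity of $\psi$ gives both the local Lipschitz property of $Q$ (hence absolute continuity and the validity of $Q(x)-Q(x_0)=\int_{x_0}^x q$, with $Q'=q$ off the countable discontinuity set of $\psi$ and its unit translate) and the uniform domination $0\le\psi(x+s)-\psi(x)\le q(x)$ needed to conclude $Q(x)-\psi(x)\to 0$, so that $C(x)\to e^{-Q(x_0)}>0$; and $\int_{x_0}^{\infty}q=+\infty$ follows from $Q(x)\ge\psi(x)\to+\infty$, which uses $\bar F(x)\to 0$. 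The only cosmetic remark is that your $q(x)=\psi(x+1)-\psi(x)$ may vanish on a set of points, which is harmless for the statement as given (it only requires $q\ge 0$ with $q(x)\to 0$ and divergent integral). In short: a self-contained, elementary proof where the paper offers only a reference; the approach buys transparency at no cost in generality.
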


Keeping the form (\ref{eq:FL}) in mind, we introduce the class $\mathcal{L}_{\beta,\gamma}$ of distributions indexed by $(\beta,\gamma)$.
\begin{definition}
\label{def:classL_ab}
The distribution function $F$ is said to belong to $\mathcal{L}_{\beta,\gamma}$ if its tail probability function $\bar F$ is asymptotically of the form
\[
 \bar F(x) \sim C(x) \exp\left\{-\int_{x_0}^x \frac{\ell(t)}{t^{\beta}} \dd t \right\}, \quad x\to +\infty,
\]
for some $x_0\ge 0$, where $\beta\le 1$, $C(x)\to C$, and $\ell(\cdot)$ is a slowly varying function such that
\[
 \lim_{t\to +\infty}\ell(t)=\gamma\in \bar R_+.
\]
Write
\[
 \mathcal{L}_{*,*} = \bigcup_{\beta\le 1}\bigcup_{\gamma\in \bar\R_+}
 \mathcal{L}_{\beta,\gamma}.
\]
\end{definition}

\begin{proposition}
\label{prop:classL_ab}
Let $F \in \mathcal{L}_{\beta,\gamma}$.
Then 
\begin{enumerate}[label={\rm (\roman*)}]
\item\label{Prop4.i}
 $F$ belongs to $\mathcal{L}$ if $\beta\in (0,1]$.

\item
$F$ belongs to $\mathcal{H}^c$ (the compliment of $\mathcal{H}$) if and only if $\beta\le 0$.

\item\label{Prop4.iii}
$F$ belongs to $\mathrm{RV}$ if and only if $\beta=1$ and $\gamma\in [0,+\infty)$.
In this case, $ F\in\mathrm{RV}_{-\gamma}$.

\item\label{Prop4.iv}
$F$ belongs to $\mathcal{S}$ if and only if $\beta\in(0,1]$. 
\end{enumerate}
\end{proposition}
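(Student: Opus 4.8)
The plan is to extract all four statements from the representation $\bar F(x)\sim C(x)\exp\{-\int_{x_0}^x q(t)\,\dd t\}$ with hazard-type exponent $q(t)=\ell(t)/t^{\beta}$, using three ingredients: Proposition \ref{prop:L} to recognise membership in $\mathcal{L}$, Karamata's theorem to evaluate $\int_{x_0}^x q$, and the inclusion chain $\mathrm{RV}_{-\gamma}\subset\mathcal{S}\subset\mathcal{L}\subset\mathcal{H}$ together with part (ii) to obtain the three ``only if'' assertions almost for free. Two preliminary remarks streamline everything. First, since $F$ is a genuine distribution function, $\bar F(x)\to 0$ while $C(x)\to C>0$, which forces $\int_{x_0}^{+\infty}q(t)\,\dd t=+\infty$ automatically. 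Second, each of the classes $\mathcal{H},\mathcal{L},\mathcal{S},\mathrm{RV}$ is closed under tail-equivalence, so the ``$\sim$'' in Definition \ref{def:classL_ab} may be treated as an equality for the purpose of class membership.

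For part (i), when $\beta\in(0,1]$ the function $q$ is regularly varying of index $-\beta<0$; since a slowly varying $\ell$ satisfies $\ell(t)=o(t^{\varepsilon})$ for every $\varepsilon>0$, choosing $\varepsilon<\beta$ gives $q(t)\to 0$ even when $\gamma=+\infty$. Together with the automatic divergence of $\int q$, Proposition \ref{prop:L} yields $F\in\mathcal{L}$. For part (ii), the ``if'' direction is a direct estimate of $\lambda x-\int_{x_0}^x q$: for $\beta<0$, Karamata gives $\int_{x_0}^x q\sim \ell(x)x^{1-\beta}/(1-\beta)$, which is superlinear, so $\lambda x-\int_{x_0}^x q\to-\infty$ for every $\lambda>0$; for $\beta=0$ one has $\int_{x_0}^x q\sim x\,\ell(x)$ with $\ell(x)\to\gamma>0$ (recall $\bar\R_+=(0,+\infty]$ excludes $0$), so the same limit holds once $\lambda<\gamma$. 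In either case $e^{\lambda x}\bar F(x)\to 0$ for a suitable $\lambda$, i.e.\ $F\in\mathcal{H}^c$. The ``only if'' direction is immediate from part (i): $\beta\in(0,1]$ gives $F\in\mathcal{L}\subset\mathcal{H}$, hence $F\notin\mathcal{H}^c$.

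For part (iii), the ``if'' direction ($\beta=1$, $\gamma<+\infty$) rests on the substitution $\int_x^{\lambda x}\ell(t)/t\,\dd t=\int_1^{\lambda}\ell(xs)/s\,\dd s\to\gamma\log\lambda$, valid because $\ell(xs)\to\gamma$ uniformly for $s\in[1,\lambda]$; then $\bar F(\lambda x)/\bar F(x)\to\lambda^{-\gamma}$, so $F\in\mathrm{RV}_{-\gamma}$. The ``only if'' direction rules out the remaining cases: $\beta\le 0$ is light-tailed by part (ii), hence outside $\mathrm{RV}\subset\mathcal{H}$; for $\beta\in(0,1)$ Karamata makes $-\log\bar F(x)\sim \ell(x)x^{1-\beta}/(1-\beta)$ grow like a positive power, so $\bar F(\lambda x)/\bar F(x)\to 0$ for $\lambda>1$, contradicting the strictly positive limit demanded by regular variation; the identical divergence argument (now $\int_x^{\lambda x}\ell/t\,\dd t\sim\ell(x)\log\lambda\to\infty$) disposes of $\beta=1,\gamma=+\infty$.

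Part (iv) is where the real work lies. Its ``only if'' direction is again free: $\beta\le 0$ gives $F\in\mathcal{H}^c$ by part (ii), and $\mathcal{S}\subset\mathcal{H}$, so $F\notin\mathcal{S}$. The ``if'' direction is the main obstacle. I would split it: if $\beta=1$ and $\gamma<+\infty$, then part (iii) gives $F\in\mathrm{RV}_{-\gamma}\subset\mathcal{S}$ at once; in the remaining regime, $\beta\in(0,1)$ or $(\beta=1,\gamma=+\infty)$, the tail is of Weibull type $\exp\{-c\,\ell(x)\,x^{1-\beta}\}$ or log-normal type $\exp\{-\int_{x_0}^x\ell(t)/t\,\dd t\}$, and I would establish $F\in\mathcal{S}^*$ and invoke $\mathcal{S}^*\subset\mathcal{S}$. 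Here the mean $\mu=\int_0^{+\infty}\bar F$ is finite because in this regime $\bar F(x)=o(x^{-p})$ for every $p>0$, and the defining limit $\int_0^x\frac{\bar F(x-y)}{\bar F(x)}\bar F(y)\,\dd y\to 2\mu$ is obtained by the standard splitting of the integral: the long-tail property of part (i) handles bounded $y$ by dominated convergence, while the vanishing, asymptotically decreasing hazard rate $q$ controls the middle range through $\bar F(x-y)/\bar F(x)\le e^{y\,q(x-y)}$. The delicate point is the uniform domination required to pass to the limit; this is where one uses that $q=\ell(t)/t^{\beta}$ is regularly varying of negative index, hence asymptotically monotone via its Karamata representation, so the Pitman/Kl\"uppelberg-type estimates apply. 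Alternatively, one may simply note that the Weibull, log-normal and Pareto families, which exhaust $\mathcal{L}_{\beta,\gamma}$ for $\beta\in(0,1]$ up to tail-equivalence, all belong to $\mathcal{S}^*$ (as recalled in the excerpt).
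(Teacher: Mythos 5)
Your proposal is correct and follows essentially the same route as the paper: Proposition \ref{prop:L} plus Karamata's theorem for (i), the eventual lower bound $q(t)\ge\lambda_0>0$ for (ii), and the inclusion chain together with (ii) for the ``only if'' halves of (iii) and (iv). Two points of comparison. For (iii) you verify the regular-variation limit directly via $\int_x^{\lambda x}\ell(t)/t\,\dd t\to\gamma\log\lambda$ and rule out the other parameter ranges by showing $\bar F(\lambda x)/\bar F(x)\to 0$; the paper instead passes through the representation theorem for slowly varying functions (\cite[Theorem 1.3.1]{bingham-etal:1987}). Your version is arguably cleaner for the ``only if'' direction, since the paper's argument shows $F\in\mathrm{RV}_{-\gamma'}\Rightarrow F\in\mathcal{L}_{1,\gamma'}$ and implicitly relies on the parametrization being unambiguous, whereas you prove the contrapositive outright. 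For (iv), the ``Pitman/Kl\"uppelberg-type estimates'' you defer to at the delicate step are exactly what the paper invokes as \cite[Corollary 3.9\,(b)]{goldie-kluppelberg:1998} (hypotheses $q(x)\to 0$, $xq(x)\to+\infty$, $q\in\mathrm{RV}_{-\beta}$ give $F\in\mathcal{S}^*$), so your sketch and the paper's citation carry the same content; it would be cleaner to cite that corollary than to re-derive it. One caveat: your fallback remark that the Weibull, log-normal and Pareto families ``exhaust'' $\mathcal{L}_{\beta,\gamma}$ for $\beta\in(0,1]$ up to tail-equivalence is false (e.g.\ $\bar F(x)=\exp\{-\sqrt{x}\log x\}$ lies in $\mathcal{L}_{1/2,+\infty}$ but is tail-equivalent to none of these), so that alternative should be dropped; it does not affect your main argument.
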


\begin{proof}
Put $q(t)=\ell(t)/t^\beta$, $t>0$.
\begin{enumerate}[label={\rm (\roman*)}]
\item
When $\beta<1$,
\[
\int_{x_0}^x q(t)\dd t=\int_{x_0}^x \ell(t)/t^\beta \dd t\sim (1-\beta)^{-1} \ell(x) x^{1-\beta}
\]
by Karamata's theorem (e.g., \cite[Theorem 1.5.11]{bingham-etal:1987}, \cite[Theorem 2.45]{foss-etal:2013}), which tends to infinity as $x\to +\infty$.
When $\beta=1$ and for any $\varepsilon>0$, we have
\[
 \int_{x_0}^x q(t)\dd t=\int_{x_0}^x \ell(t)/t\,\dd t \ge (\gamma-\varepsilon) \int_{x_0}^x \dd t/t \to +\infty \quad\mbox{as}\ \ x\to +\infty.
\]
Moreover, for $\beta>0$ it holds $q(x) \to 0$ as $x\to +\infty$.
Then 
$F\in\mathcal{L}$ if $\beta\in (0,1]$ by Proposition \ref{prop:L}.

\item
Set $\lambda_0\in(0,\gamma ) $.
When $\beta\le 0$, for $q(t)=\ell(t)/t^\beta$, it holds that $q(t) > \lambda_0$ for $t$ large enough, and consequently, for $x$ large enough,
\[
 \int_{x_0}^x q(t) \dd t > (x-x_0) \lambda_0.
\]
Thus,
\[
 \limsup_{x\to +\infty} e^{\lambda x}\bar F(x) \le 
 C\limsup_{x\to +\infty} e^{\lambda x -\lambda_0(x-x_0)} \dd x <+\infty
\]
for $\lambda\in(0,\lambda_0)$.
Then, $F\in\mathcal{H}^c$ by Definition \ref{def:H}.
When $\beta>0$, it holds $F\in\mathcal{L}\subset\mathcal{H}$ by \ref{Prop4.i}, and hence $F\notin\mathcal{H}^c$.

\item
According to \cite[Theorem 1.3.1]{bingham-etal:1987}, any slowly varying function has the form
\[
 C(x)\exp\left\{-\int_{x_0}^x \frac{\varepsilon(t)}{t}\,\dd t\right\},\quad \varepsilon(x)\to 0,
\]
where $C(\cdot)$ is measurable with $C(x)\to C\in(0,+\infty)$ as $x\to +\infty$.
If $F\in \mathrm{RV}$ then $F\in \mathrm{RV}_{-\gamma}$ for some $0\le \gamma<+\infty$, and hence
\begin{align*}
 \bar F(x) = x^{-\gamma}C(x)\exp\left\{-\int_{x_0}^x \frac{\varepsilon(t)}{t}\,\dd t\right\}
= C_1(x)\exp\left\{-\int_{x_0}^x \frac{\varepsilon_1(t)}{t}\,\dd t\right\}, 
\end{align*}
where
$\varepsilon_1(x):=\varepsilon(x)+\gamma\to \gamma$ as $x\to +\infty$ and $C_1(x):=C(x) x_0^\gamma$.
By Definition \ref{def:classL_ab}, it holds $F\in \mathcal{L}_{1,\gamma}$.
Conversely, if $F\in \mathcal{L}_{1,\gamma}$ for some $\gamma\in[0,+\infty)$, the very same reasoning as above (read bottom up) yields $F\in \mathrm{RV}_{-\gamma}$.

\item
For $q(x)=\ell(x)/x^\beta$ with $\beta\in(0,1)$,
or $(\beta,\gamma)=(1,+\infty)$,
we have $x q(x)\to +\infty$, $q(x)\to 0$, $q\in\mathrm{RV}_{-\beta}$, hence by \cite[Corollary 3.9 (b)]{goldie-kluppelberg:1998}, $F\in\mathcal{S}^*\subset\mathcal{S}$ holds.
If $q(x)=\ell(x)/x$ with $\lim_{x\to +\infty}\ell(x)=\gamma<+\infty$, then $F\in\mathrm{RV}\subset\mathcal{S}$ by \ref{Prop4.iii}.
When $q(x)=\ell(x)/x^\beta$ with $\beta\le 0$, we have
$F\in\mathcal{H}^c\subset\mathcal{S}^c$.
\end{enumerate}
\end{proof}



\begin{example}
\label{ex:L}
Many widely used probability laws belong to the class $\mathcal{L}_{*,*}$.

\begin{enumerate}[label={\rm (\roman*)}]
\item
Log-normal distribution $\exp\{\mathcal{N}(0,1)\}$ with density
\[
 \frac{1}{\sqrt{2\pi}x}\exp\left\{-\frac{(\log x)^2}{2}\right\}, \quad x>0,
\]
and tail probability function
\begin{equation}
\label{log_normal}
\begin{aligned}
 \bar F(x)
&\sim \frac{1}{\sqrt{2\pi}\log x}\exp\left\{-\frac{(\log x)^2}{2}\right\} \\
&\sim \exp\left\{-\int_c^x \left(\frac{\log t}{t}+\frac{1}{t\log t}\right)\,\dd t\right\},\quad x\to +\infty,
\end{aligned}
\end{equation}
belongs to $\mathcal{L}_{1,+\infty}$ with $\ell(x)=\log x+1/\log x$.
Here and in what follows, $c$ is an appropriate constant which will not be specified.
By Proposition \ref{prop:classL_ab} \ref{Prop4.iii}--\ref{Prop4.iv}, it is subexponential but not regularly varying.

\item
$\chi^2_\nu$ distribution with $\nu>0$ degrees of freedom with the tail probability function
\begin{equation}
\label{chi2}
\bar F(x) \sim \exp\left\{-\int_c^x \left(\frac{1}{2}-\frac{\nu-2}{2 t}\right)\,\dd t\right\}, \quad x\to +\infty,
\end{equation}
belongs to $\mathcal{L}_{0,1/2}$ with $\ell(x)=(1/2)-(\nu-2)/(2x)$.
It is clearly light-tailed.

\item
$\chi_\nu$ distribution with $\nu>0$ degrees of freedom (defined by $\chi_\nu:=\sqrt{\chi^2_\nu}$) and the tail probability
\begin{equation*}
\bar F(x) \sim \exp\left\{-\int_c^x \left(t-\frac{\nu-2}{t}\right)\,\dd t\right\}, \quad x\to +\infty,
\end{equation*}
belongs to $\mathcal{L}_{-1,1}$ with $\ell(x)=1-(\nu-2)/x^2$.
It is also light-tailed.

\item
$F_{\nu_1,\nu_2}$-distribution with $\nu_1,\nu_2>0$ degrees of freedom and the tail probability function
\begin{equation*}
\begin{aligned}
 \bar F(x)
 &\sim \frac{\Gamma(\frac{\nu_1+\nu_2}{2})}{\Gamma(\frac{\nu_1}{2})\Gamma(\frac{\nu_2}{2}+1)}
\left(\frac{\nu_1 x}{\nu_1 x+\nu_2}\right)^{\nu_1/2}
\left(1-\frac{\nu_1 x}{\nu_1 x+\nu_2}\right)^{\nu_2/2} \\
 &\sim \exp\left\{-\int_c^x \left(\frac{\nu_1\nu_2}{2(\nu_1 t+\nu_2)}\right)\,\dd t\right\},\quad x\to +\infty,
\end{aligned}
\end{equation*}
belongs to $\mathcal{L}_{1,\nu_2/2}$ with $\ell(x)=\frac{\nu_2}{2(1+(\nu_2/\nu_1) x^{-1})}$, hence $\mathrm{RV}_{-\nu_2/2}$ by Proposition \ref{prop:classL_ab} \ref{Prop4.iii}.

\item
Bessel distribution with degrees of freedom $\nu_1,\nu_2>0$ is given by the density
\[
 \frac1{2^{\frac{\nu_1+\nu_2-2}{2}}\Gamma(\frac{\nu_1}{2})\Gamma(\frac{\nu_2}{2})} x^{\frac{\nu_1+\nu_2-4}{4}} K_{\frac{\nu_1-\nu_2}{2}}(\sqrt{x}), \quad x>0,
\]
where $K_j(\cdot)$ is the modified Bessel function of the second kind.
It is the probability law of the product of independent $\chi_{\nu_1}^2$- and $\chi_{\nu 2}^2$-distributed random variables with degrees of freedom $\nu_1,\nu_2>0$ (see \cite{wells-etal:1962}).
Its tail probability function is given by
\begin{equation}
\begin{aligned}
\label{Bessel}
 \bar F(x)
 &\sim \frac{\sqrt{\pi}}{2^{\frac{\nu_1+\nu_2-3}{2}}\Gamma(\frac{\nu_1}{2})\Gamma(\frac{\nu_2}{2})} x^{\frac{\nu_1+\nu_2-3}{4}} \exp\bigl(-\sqrt{x}\bigr) \\
 &\sim \exp\left\{-\int_c^x \left(\frac{1}{2\sqrt{t}}-\frac{\nu_1+\nu_2-3}{4 t}\right)\,\dd t\right\}, \quad x\to +\infty.
\end{aligned}
\end{equation}
Thus, Bessel distribution belongs to $\mathcal{L}_{1/2,1/2}$ with $\ell(x)=1/2-(\nu_1+\nu_2-3)/(4\sqrt{x})$.

It is subexponential but not regularly varying.
\end{enumerate}
\end{example}

The relationships between the distribution classes $\mathcal{L}_{\beta,\gamma}$, $\mathcal{H}$, $\mathcal{S}$ and $\mathrm{RV}$ illustrated in Example \ref{ex:L} are summarized in Table \ref{table:class1}.
Here, $\mathcal{S}\setminus\mathrm{RV}$ means $\mathcal{S}\cap\mathrm{RV}^c$.

\begin{table}[ht]
\caption{Class $\mathcal{L}_{\beta,\gamma}$ and representative distributions.}
\label{table:class1}
\begin{center}
\begin{tabular}{ccccccc@{}}
\toprule
Tail class
& $\mathcal{H}^c$ & $\mathcal{H}^c$ & $\mathcal{S}\setminus\mathrm{RV}$ & $\mathcal{S}\setminus\mathrm{RV}$ & $\mathrm{RV}_{-\gamma}$ \\
\midrule
\multirow{2}{*}{$\mathcal{L}_{\beta,\gamma}$} & \multirow{2}{*}{$\beta<0$} & \multirow{2}{*}{$\beta=0$} & \multirow{2}{*}{$\beta\in(0,1)$} & $\beta=1$ & $\beta=1$ \\
& & & & $\gamma=+\infty$ & $\gamma<+\infty$ \\[2pt] 
\midrule
Example & $\chi$ & $\chi^2$ & Bessel & Log-normal & $F$ 
\\
$(\beta,\gamma)$ & $(-1,1)$ & $(0,\frac12)$ & $(\frac12,\frac12)$ & $(1,+\infty)$ & $(1,\gamma)$ \\[1pt]
\bottomrule
\end{tabular}
\end{center}
\end{table}

\section{Main results}
\label{sec:Main}

We now state the main results of the paper.
Short proofs are given on the spot, while longer proofs are provided in Section \ref{sec:proof}.

\subsection{Asymptotic relative error of the tube method approximation}

Recall the notation
$P_\mathrm{tube}(c) = \E[\chi(M_c)]$ and $P(c) = \Pr\bigl(\sup_{u\in M} \langle u,\xi\rangle\ge c \bigr)$.
In the sequel, we evaluate the asymptotics of the relative error defined by
\[
 \Delta(c) = \frac{P_\mathrm{tube}(c) - P(c)}{P_\mathrm{tube}(c)}
\]
as $c\to +\infty$, and show that, in some important cases, it does not necessarily tend to zero.

Throughout this section, we deal with the spherically contoured random field $\Xi=\{\Xi_u, u\in M\}$ defined in (\ref{Xi_u}) under Assumption \ref{as:M} on its index domain $M\subset \S^{n-1}$.
Recall that $R_n=r^2=\sum_{j=1}^n \xi_j^2$.
For its distribution function $F_{R_n}(x)=\Pr(R_n\le x) $, $x\in\R_+$, we
now assume that $F_{R_n}\in \mathcal{L}_{\beta,\gamma}$ with $\beta\le 1$, $\gamma\in\bar\R_+$ and corresponding function $q(t)=\ell(t)t^{-\beta}$.
Let the random variable 
$B_k \sim B_{\frac{k}{2},\frac{n-k}{2}}$ be Beta distributed with parameters $k/2$ and $(n-k)/2$, $k=1,\ldots,m+1$, where $m=\dim M$.
Define the expression
\begin{equation}
\label{eq:D_k}
 D_k(\theta,c) := {\int_0^{\cos^2\theta} \exp\left\{ -\int_{c^2}^{c^2/B_k} q(t)\,\dd t \right\}\, \dd P_{B_k}}, \quad \theta\in[0,\pi/2],\ c>0,
\end{equation}
where $\dd P_{B_k} $ means integration with respect to the probability density of $B_k$.
Note that
\[
 \bigl(1+t(x,v)\bigr)^{-1} = \cos^2\theta(x,v).
\]
The proofs of Lemmas \ref{lem:MainL} and \ref{lem:MainD} below will be provided in Section \ref{sec:proof}.

\begin{lemma}
\label{lem:MainL}
For the above random field $\Xi$  with $F_{R_n}\in\mathcal{L}_{\beta,\gamma}$, it holds that
\begin{equation}
\label{eq:MainL}
 \Delta(c) \sim \frac{\sum_{k=1}^{m+1} \int_{\mathcal{X}} w_{k}(x,v) D_{k}\bigl(\theta(x,v),c\bigr)
 \, \dd x\, \dd v}{\sum_{k=1}^{m+1} D_{k}(0,c)\int_{\mathcal{X}} w_{k}(x,v)\, \dd x\, \dd v}, \quad c\to +\infty.
\end{equation}
\end{lemma}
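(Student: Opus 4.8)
The plan is to start from the exact expressions \eqref{tube} and \eqref{max} and write the difference $P_\mathrm{tube}(c)-P(c)$ as a single integral over the fibre space $\mathcal{X}$. Subtracting the integrand of \eqref{max} from that of \eqref{tube}, each summand contributes
\[
 w_k(x,v)\,\Pr\bigl(R_k\ge c^2,\ \bar R_{n-k} > t(x,v)\,R_k\bigr),
\]
so that $P_\mathrm{tube}(c)-P(c)=\sum_{k=1}^{m+1}\int_{\mathcal X} w_k(x,v)\,\Pr\bigl(R_k\ge c^2,\ \bar R_{n-k}>t(x,v) R_k\bigr)\,\dd x\,\dd v$. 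The first step is therefore to reduce this tail event to the form $D_k(\theta,c)$. The key observation is that $\xi$ is spherically contoured, so conditionally on $R_n=r^2$ the vector $(\xi_1,\dots,\xi_n)/r$ is uniform on $\S^{n-1}$, whence $B_k:=R_k/R_n\sim B_{k/2,(n-k)/2}$ is independent of $R_n$. Rewriting $\bar R_{n-k}=R_n-R_k=R_n(1-B_k)$ and $R_k=R_n B_k$, the constraint $\bar R_{n-k}>t\,R_k$ becomes $B_k<1/(1+t)=\cos^2\theta(x,v)$, while $R_k\ge c^2$ becomes $R_n\ge c^2/B_k$. This separates the event along the independent pair $(B_k,R_n)$.

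The second step is to perform the $R_n$-integration using the membership $F_{R_n}\in\mathcal{L}_{\beta,\gamma}$. Conditioning on $B_k=y\in(0,\cos^2\theta)$, the probability of $R_n\ge c^2/y$ is $\bar F_{R_n}(c^2/y)$, and the representation from Proposition \ref{prop:L} together with Definition \ref{def:classL_ab} gives
\[
 \frac{\bar F_{R_n}(c^2/y)}{\bar F_{R_n}(c^2)}\sim\exp\left\{-\int_{c^2}^{c^2/y} q(t)\,\dd t\right\},\quad c\to+\infty,
\]
since the slowly varying prefactor $C(\cdot)$ has a finite positive limit and cancels. Integrating over $y$ against the density of $B_k$ yields exactly $\bar F_{R_n}(c^2)\,D_k(\theta(x,v),c)$ in the numerator integrand, up to the asymptotic equivalence above. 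The same computation applied to \eqref{tube}, where the constraint $\theta=0$ forces $\cos^2\theta=1$ so the $B_k$-integral runs over all of $(0,1)$, shows that $\Pr(R_k\ge c^2)\sim \bar F_{R_n}(c^2)\,D_k(0,c)$, giving the denominator. Dividing numerator by denominator and cancelling the common factor $\bar F_{R_n}(c^2)$ produces \eqref{eq:MainL}.

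The main obstacle is justifying that the pointwise asymptotic equivalence of the ratio $\bar F_{R_n}(c^2/y)/\bar F_{R_n}(c^2)$ may be taken inside the $\dd x\,\dd v$ and $\dd P_{B_k}$ integrals and across the finite sum, i.e. interchanging the limit $c\to+\infty$ with integration. Near the endpoint $y\to\cos^2\theta$ this is harmless, but uniformity in $y$ over $(0,\cos^2\theta)$ and integrability of $w_k(x,v)$ must be controlled; here Assumption 2 (absolute integrability of the $w_k$) and a dominated-convergence argument, using the fact that $\exp\{-\int_{c^2}^{c^2/y}q\}\le 1$ since $q\ge 0$, should suffice, with the slowly varying behaviour of $\ell$ and the boundedness of $B_k$ away from the degenerate values providing the needed control. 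The only delicate point is ensuring the denominator does not vanish asymptotically, which follows because $D_k(0,c)$ stays bounded below (the integrand at small $y$ contributes a fixed positive mass).
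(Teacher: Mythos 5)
Your proposal is correct and follows essentially the same route as the paper: the representation $(R_k,\bar R_{n-k})\eqd(R_nB_k,R_n(1-B_k))$ with $B_k\sim B_{\frac k2,\frac{n-k}2}$ independent of $R_n$, normalization by $\bar F_{R_n}(c^2)$, the long-tail representation giving $\bar F_{R_n}(c^2/y)/\bar F_{R_n}(c^2)\sim\exp\{-\int_{c^2}^{c^2/y}q(t)\,\dd t\}$, and dominated convergence (the ratio being bounded by $1$) to pass the limit through the $\dd P_{B_k}$ and $\dd x\,\dd v$ integrals. The only ingredient the paper makes explicit that you gloss over is the elementary fact needed to pass asymptotic equivalence through the finite sum over $k$ (termwise $A_k\sim B_k$ plus convergence of the ratios $B_i/B_j$), but this is a minor point.
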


Recall that $\beta\le 1$.
Introduce the function
\begin{equation}
\label{eq:g}
 g_\beta(y) :=
 \begin{cases} \displaystyle 
  \frac{y^{\beta-1} -1}{1-\beta} & \mbox{if }\beta<1, \\
  - \log y &\mbox{if } \beta=1,
  \end{cases}
 \qquad y\in [0,1].
\end{equation}
Notice that
$g'_\beta(y) = -y^{\beta-2}$ for all $\beta\le 1$.
Let $\ell_0(h)$ be an arbitrary function such that $\ell_0(h)\sim\ell(h)$ as $h\to +\infty$,
and define
\begin{equation}
\label{eq:rbeta_def}
 r_\beta(h,y) := h^{1-\beta} \int_1^{y^{-1}} \bigl\{\ell(hv) - \ell_0(h) \bigr\} v^{-\beta}\, \dd v.
\end{equation}
In Theorems \ref{thm:MainS*} and \ref{thm:MainS*_dim0}, the function $r_\beta$ accounts for higher-order terms in the asymptotics of the relative error $\Delta(c)$ as $c\to +\infty$.
Note that by \cite[Theorem 1.5.2]{bingham-etal:1987},
$\ell(h v)/\ell(h)=1+o(1)$ uniformly in $v$ on any compact interval as $h\to +\infty$,
which, combined with $\ell(h )=\ell_0(h)(1+o(1))$, yields by Karamata's theorem
 (e.g., \cite[Theorem 1.5.11]{bingham-etal:1987}, \cite[Theorem 2.45]{foss-etal:2013}) that
\begin{equation}
\label{eq:rbeta_def-o}
 r_\beta(h,y) = o\bigl(h^{1-\beta}\ell_0(h)\bigr) g_\beta(y), \quad h\to +\infty, \quad \mbox{{uniformly in $y$.}}
\end{equation}
Similarly,
\begin{align}
 \frac{\partial r_\beta(h,y)}{\partial y}
 &=
  -h^{1-\beta} \bigl\{\ell(h y^{-1}) - \ell_0(h) \bigr\} y^{\beta-2} \nonumber \\
 &= o\bigl(h^{1-\beta}\ell_0(h)\bigr) g'_\beta(y), \quad h\to +\infty, \quad \mbox{uniformly in $y$},
\label{eq:rbeta_def-o1}
\end{align}
which will be used later.

An appropriate choice of the function $\ell_0$ may significantly simplify the calculations.
In Example \ref{ex:three_dist}, $\ell_0(h)$ is chosen as the leading term of the asymptotic expansion of $\ell(h)$, and
$o(h^{1-\beta}\ell_0(h))$ in (\ref{eq:rbeta_def-o}) becomes $O(1)$ as $h\to +\infty$.

\begin{lemma}
\label{lem:MainD}
For $F_{R_n}\in\mathcal{L}_{\beta,\gamma}$,
$D_k(\theta,c)$ in (\ref{eq:D_k}) is asymptotically equivalent to
\begin{equation}
\label{eq:D_k_asy}
D_k(\theta,c) \sim
\begin{cases}
\displaystyle
\frac{ \cos^{k-2\beta+2}\theta\sin^{n-k-2}\theta\,
\exp\bigl\{ -c^{2(1-\beta)} \ell_0(c^2) g_\beta\left(\cos^2\theta\right) -r_\beta(c^2,\cos^2\theta) \bigr\}}{B\bigl(\frac{k}2,\frac{n-k}2\bigr) c^{2(1-\beta)} \ell_0(c^2)} \hspace*{-60mm} & \\ & \mbox{if }\beta<1 \mbox{ or } (\beta,\gamma)=(1,+\infty);\,\theta>0,\\[1.5em]
\displaystyle
 \frac{\Gamma\left(\frac{n-k}{2}\right)}{B\bigl(\frac{k}2,\frac{n-k}2\bigr) \{c^{2(1-\beta)} \ell_0(c^2)\}^{\frac{n-k}{2}}} & \mbox{if }\beta<1 \mbox{ or } (\beta,\gamma)=(1,+\infty);\,\theta=0, \\[1.5em]
\displaystyle
 a_{\gamma,k} \Pr\Bigl(\widetilde B_{\gamma,k}<\cos^2 \theta\Bigr) & \mbox{if } \beta=1,\gamma<+\infty
\end{cases}
\end{equation}
as $c\to +\infty$, where $\widetilde B_{\gamma,k}$ is a random variable 
distributed as the beta distribution $B_{\gamma+\frac{k}{2},\frac{n-k}{2}}$
and 
\begin{equation*}
a_{\gamma,k}:=\frac{B\bigl(\gamma+\frac{k}{2},\frac{n-k}{2} \bigr)}{B\bigl(\frac{k}{2},\frac{n-k}{2} \bigr)}.
\end{equation*}
\end{lemma}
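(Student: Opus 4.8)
The plan is to reduce $D_k(\theta,c)$ in \eqref{eq:D_k} to an explicit incomplete Beta integral carrying an exponential weight, and then to extract its asymptotics by a Laplace-type argument whose nature depends on whether the scale $A(c):=c^{2(1-\beta)}\ell_0(c^2)$ diverges or stays bounded. First I would rescale the inner integral by $t=c^2 v$, obtaining $\int_{c^2}^{c^2/B_k}q(t)\,\dd t = c^{2(1-\beta)}\int_1^{1/B_k}\ell(c^2 v)\,v^{-\beta}\,\dd v$. Adding and subtracting $\ell_0(c^2)$ and recognizing $\int_1^{1/b}v^{-\beta}\,\dd v=g_\beta(b)$ from \eqref{eq:g} together with the definition \eqref{eq:rbeta_def} of $r_\beta$, this becomes $\int_{c^2}^{c^2/B_k}q(t)\,\dd t = A(c)\,g_\beta(B_k)+r_\beta(c^2,B_k)$. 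Writing out the density of $B_k\sim B_{k/2,(n-k)/2}$, I then have $D_k(\theta,c)=B\bigl(\tfrac{k}2,\tfrac{n-k}2\bigr)^{-1}\int_0^{\cos^2\theta}e^{-\Phi_c(b)}\,b^{k/2-1}(1-b)^{(n-k)/2-1}\,\dd b$ with $\Phi_c(b):=A(c)g_\beta(b)+r_\beta(c^2,b)$. Since $\ell_0\to\gamma>0$, one checks that $A(c)\to+\infty$ precisely when $\beta<1$ or $(\beta,\gamma)=(1,+\infty)$, whereas $A(c)=\ell_0(c^2)\to\gamma$ is bounded when $\beta=1$, $\gamma<+\infty$; this is exactly the trichotomy of the statement.

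In the divergent case $A(c)\to+\infty$ with $\theta>0$, the weight $e^{-\Phi_c}$ concentrates at the right endpoint $b_0:=\cos^2\theta\in(0,1)$ because $g_\beta$ is strictly decreasing ($g_\beta'(b)=-b^{\beta-2}$). The key estimate is that \eqref{eq:rbeta_def-o1} gives $\Phi_c'(b)=A(c)g_\beta'(b)\bigl(1+o(1)\bigr)=-A(c)\,b^{\beta-2}\bigl(1+o(1)\bigr)$ uniformly in $b$, so $\Phi_c$ is eventually decreasing and the standard endpoint Laplace formula $\int_0^{b_0}e^{-\Phi_c(b)}\psi(b)\,\dd b\sim \psi(b_0)e^{-\Phi_c(b_0)}/|\Phi_c'(b_0)|$ applies with $\psi(b)=b^{k/2-1}(1-b)^{(n-k)/2-1}$. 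Substituting $\psi(b_0)=\cos^{k-2}\theta\,\sin^{n-k-2}\theta$ and $|\Phi_c'(b_0)|=A(c)\cos^{2\beta-4}\theta\,(1+o(1))$ yields exactly the first branch of \eqref{eq:D_k_asy}. Here $r_\beta$ must be retained inside the exponent, since it may diverge; the point is that over the Laplace window of width $O(1/A(c))$ the uniform bound $\partial_b r_\beta=o(A(c))\,O(1)$ shows $r_\beta(c^2,b)-r_\beta(c^2,b_0)=o(1)$, so it can be frozen at $b_0$.

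For $\theta=0$ the upper limit is $b_0=1$, where $g_\beta(1)=0$ and the concentration occurs at the boundary $b=1$ itself. Expanding $g_\beta(1-w)\sim w$ and $\psi(1-w)\sim w^{(n-k)/2-1}$ as $w\downarrow 0$, and noting that on the relevant scale $w\sim 1/A(c)$ the remainder obeys $r_\beta(c^2,1-w)=o(A(c))\,g_\beta(1-w)=o(1)$, I would rescale $w=s/A(c)$ to obtain $D_k(0,c)\sim B\bigl(\tfrac{k}2,\tfrac{n-k}2\bigr)^{-1}A(c)^{-(n-k)/2}\int_0^{\infty}e^{-s}s^{(n-k)/2-1}\,\dd s$. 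The Gamma integral equals $\Gamma\bigl(\tfrac{n-k}2\bigr)$, giving the second branch of \eqref{eq:D_k_asy}.

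Finally, in the bounded case $\beta=1$, $\gamma<+\infty$ there is no concentration: $A(c)=\ell_0(c^2)\to\gamma$ and $r_\beta(c^2,b)=o(1)\,g_\beta(b)\to 0$, so $e^{-\Phi_c(b)}=\exp\{-\ell_0(c^2)(-\log b)-r_\beta(c^2,b)\}\to b^\gamma$ pointwise (equivalently, this is the regular variation $\bar F(c^2/b)/\bar F(c^2)\to b^\gamma$ from Proposition \ref{prop:classL_ab}\,\ref{Prop4.iii}). Since the integrand is bounded by $1$ on $(0,\cos^2\theta)$, dominated convergence gives $D_k(\theta,c)\to B\bigl(\tfrac{k}2,\tfrac{n-k}2\bigr)^{-1}\int_0^{\cos^2\theta}b^{\gamma+k/2-1}(1-b)^{(n-k)/2-1}\,\dd b$, which I rewrite as $a_{\gamma,k}\Pr(\widetilde B_{\gamma,k}<\cos^2\theta)$ by identifying the incomplete Beta function with the law $B_{\gamma+k/2,(n-k)/2}$, the third branch. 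I expect the main obstacle to be the first (endpoint) case: making the Laplace estimate rigorous requires the uniform control of $\Phi_c'$ and a careful argument that $r_\beta$ — although of lower order than the leading exponent and hence negligible relative to it — cannot simply be dropped but must be frozen at $b_0$, precisely because it may grow without bound while being $o\bigl(A(c)g_\beta(b_0)\bigr)$.
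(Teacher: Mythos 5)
Your proposal is correct and follows essentially the same route as the paper: reduce $D_k(\theta,c)$ to a weighted incomplete Beta integral with exponent $A(c)g_\beta(b)+r_\beta(c^2,b)$, treat the divergent-scale cases by an endpoint Laplace argument (the paper does this via the substitution $z=g_\beta(y)$ followed by integration by parts, which is the same estimate you apply directly in the $b$ variable), and handle $\beta=1$, $\gamma<+\infty$ by regular variation and dominated convergence. Your explicit remark that $r_\beta$ must be frozen at the endpoint rather than dropped, justified by the uniform derivative bound \eqref{eq:rbeta_def-o1} over the $O(1/A(c))$ Laplace window, is exactly the control the paper relies on.
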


Combining Lemmas \ref{lem:MainL} and \ref{lem:MainD}, we immediately obtain the statements of Theorems \ref{thm:MainRV} and \ref{thm:MainS*} below.
Theorem \ref{thm:MainRV} states that when $F_{R_n}$ is regularly varying and $w_{k}$ non-negative for all $k$,
the tube formula does not provide the correct leading term for $P(c)$ unless $\theta(x,v)\equiv \pi/2$ (i.e., $t(x,v)=+\infty$ and $M$ is geodesically convex).

\begin{theorem}
\label{thm:MainRV}
Suppose that $F_{R_n}\in\mathcal{L}_{1,\gamma}=\mathrm{RV}_{-\gamma}$, $\gamma\in(0,+\infty)$.
One has
\begin{equation}
\label{eq:MainRV}
 \Delta(c) \sim \frac{\sum_{k=1}^{m+1} a_{\gamma,{k}} \int_{\mathcal{X}} w_{k}(x,v) \Pr\Bigl(\widetilde B_{\gamma,{k}}<\cos^2 \theta(x,v)\Bigr)
\, \dd x\, \dd v}
{\sum_{k=1}^{m+1} a_{\gamma,{k}} \int_{\mathcal{X}} w_{k}(x,v)\, \dd x\, \dd v}
\end{equation}
as $c\to +\infty$, where the coefficients $a_{\gamma,k}$ and the random variables $\widetilde B_{\gamma,k}$ are defined in Lemma \ref{lem:MainD}.
\end{theorem}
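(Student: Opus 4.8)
The plan is to substitute the $\beta=1$, $\gamma<+\infty$ asymptotics of $D_k(\theta,c)$ from Lemma \ref{lem:MainD} into the representation (\ref{eq:MainL}) of $\Delta(c)$ furnished by Lemma \ref{lem:MainL}, and then to justify passing the limit $c\to+\infty$ through the finite sum over $k$ and the integrals over $\mathcal{X}$.

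First I would record the relevant branch of Lemma \ref{lem:MainD}. Since $F_{R_n}\in\mathrm{RV}_{-\gamma}$ corresponds to $\beta=1$ and $\gamma\in(0,+\infty)$ by Proposition \ref{prop:classL_ab}\ref{Prop4.iii}, the third case of (\ref{eq:D_k_asy}) applies and gives, for each fixed $\theta\in[0,\pi/2]$,
\[
 D_k(\theta,c)\longrightarrow a_{\gamma,k}\,\Pr\bigl(\widetilde B_{\gamma,k}<\cos^2\theta\bigr),\qquad c\to+\infty.
\]
Setting $\theta=0$ and using $\Pr(\widetilde B_{\gamma,k}<1)=1$ for the beta variable $\widetilde B_{\gamma,k}$, the denominator factors satisfy $D_k(0,c)\to a_{\gamma,k}$. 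This is consistent with the direct computation $D_k(0,c)=\E\bigl[\exp\{-\int_{c^2}^{c^2/B_k}q(t)\,\dd t\}\bigr]\to\E[B_k^\gamma]=a_{\gamma,k}$, valid because $q(t)=\ell(t)/t$ with $\ell(t)\to\gamma$, so that $\int_{c^2}^{c^2/B_k}q(t)\,\dd t\to-\gamma\log B_k$.

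The one step requiring care is the interchange of the limit with the integrals and the sum, which I would carry out by dominated convergence. Because $q(t)=\ell(t)/t\ge 0$ for all sufficiently large $t$ and the inner integral in (\ref{eq:D_k}) runs over $[c^2,c^2/B_k]\subset[c^2,+\infty)$ with $B_k\in(0,1)$, the integrand in (\ref{eq:D_k}) is bounded by $1$ once $c$ is large, so that $0\le D_k(\theta,c)\le\Pr(B_k<\cos^2\theta)\le 1$ uniformly in $\theta$ and in $c$. Hence $|w_k(x,v)\,D_k(\theta(x,v),c)|\le|w_k(x,v)|$, and this majorant is integrable on $\mathcal{X}$ by the standing integrability assumption on the functions $w_k$; the same bound at $\theta=0$ dominates the denominator integrands. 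Dominated convergence then replaces each $D_k$ by its pointwise limit inside the integrals: the numerator of (\ref{eq:MainL}) converges to $\sum_{k=1}^{m+1}a_{\gamma,k}\int_{\mathcal{X}}w_k(x,v)\,\Pr(\widetilde B_{\gamma,k}<\cos^2\theta(x,v))\,\dd x\,\dd v$ and its denominator to $\sum_{k=1}^{m+1}a_{\gamma,k}\int_{\mathcal{X}}w_k(x,v)\,\dd x\,\dd v$, and taking the ratio yields (\ref{eq:MainRV}).

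The main obstacle I anticipate is ensuring that the limiting denominator is strictly positive, so that the ratio of the two limits is the genuine limit of the ratio and $\Delta(c)$ is well defined for large $c$. Under the nonnegativity of the $w_k$ assumed here this is immediate, since $a_{\gamma,k}>0$ forces $\sum_k a_{\gamma,k}\int_{\mathcal{X}}w_k\,\dd x\,\dd v>0$ as soon as some $w_k$ does not vanish identically; in a genuinely signed setting one would additionally have to rule out cancellation in $\sum_k a_{\gamma,k}\int_{\mathcal{X}}w_k\,\dd x\,\dd v$, which is precisely where the hypothesis $w_k\ge 0$ enters.
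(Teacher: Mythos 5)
Your proposal is correct and follows essentially the same route as the paper, whose proof is the one-line instruction to substitute the $\beta=1$, $\gamma<+\infty$ branch of (\ref{eq:D_k_asy}) into (\ref{eq:MainL}). The extra details you supply (the uniform bound $0\le D_k\le 1$ enabling dominated convergence over $\mathcal{X}$, and the positivity of the limiting denominator) are sound and merely make explicit what the paper leaves implicit.
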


\begin{proof}
Substitute the asymptotics for $D_k(\theta,c)$ as $c\to +\infty$ from (\ref{eq:D_k_asy}) into (\ref{eq:MainL}).
\end{proof}

The right-hand side of \eqref{eq:MainRV} is constant in $c$ and distinct from zero in many particular cases (for example, if functions $w_{k}$ are all positive and $\theta(x,v) \not\equiv \pi/2$).
Now give an upper bound for 
$|\Delta(c)|$:
\begin{remark}
\label{rem:bound0}
When $c$ is large, it holds that $|\Delta(c)| < \bar\Delta$, where
\begin{equation}
\label{bound0}
 \bar\Delta = \frac{\sum_{k=1}^{m+1} a_{\gamma,{k}} \int_{\mathcal{X}} w^+_{k}(x,v)\, \dd x\, \dd v\, \Pr\Bigl(\widetilde B_{\gamma,{k}}<\cos^2 \theta_*\Bigr)
}
{\sum_{k=1}^{m+1} a_{\gamma,{k}} \int_{\mathcal{X}} w_{k}(x,v)\, \dd x\, \dd v}
\end{equation}
with
\[
 w^+_{k}(x,v) := \frac{1}{\Omega_{k}\Omega_{n-k}}\sum_{d=k-1}^m \1_{\{x\in\partial M_d\}}|\tr_{d-k+1} H_d(x,v)|, \quad 1\le k\le m+1,
\]
and consequently
\[
 (1-\bar\Delta) P_\mathrm{tube}(c) < P(c) < (1+\bar\Delta) P_\mathrm{tube}(c).
\]
The upper bound (\ref{bound0}) simplifies significantly for $m=\dim M=0$ (see Remark \ref{rem:bound}).
\end{remark}
Now consider the case of a light-tailed or subexponential (but not regularly varying) distribution of $R_n$:
\begin{theorem}
\label{thm:MainS*}
Suppose that $F_{R_n}\in\mathcal{L}_{\beta,\gamma}$ with $\beta<1$ or $(\beta, \gamma)=(1,+\infty)$.
Then, as $c\to +\infty$, one has
\begin{equation}
\label{eq:MainS*}
 \Delta(c) \sim
 \Bigl\{c^{2(1-\beta)} \ell_0\bigl(c^2\bigr)\Bigr\}^{\frac{n-m-3}{2}}
 \frac{\sum_{k=1}^{m+1}
 \frac{1}{B\bigl(\frac{k}2,\frac{n-k}2\bigr)}
 \int_{\mathcal{X}} w_{k}(x,v) G_{k}(\theta(x,v),c) \,\dd x\,\dd v}
      {\frac{\Gamma\bigl(\frac{n-m-1}{2}\bigr)}{B\bigl(\frac{m+1}{2},\frac{n-m-1}{2}\bigr)} \frac{\mathrm{Vol}_m(M)}{\Omega_{m+1}}},
\end{equation}
where
\[
 G_k(\theta,c) :=
 \cos^{k-2\beta+2}\theta\sin^{n-k-2}\theta
 \exp\Bigl\{ -c^{2(1-\beta)} \ell_0\left(c^2\right) g_\beta(\cos^2\theta) -r_\beta(c^2,\cos^2\theta)\Bigr\}
\]
and $\mathrm{Vol}_m(M) = \int_{M} \dd x$
is the $m$-dimensional volume of the index manifold $M$.
\end{theorem}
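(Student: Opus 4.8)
The plan is to substitute the sharp asymptotics of $D_k(\theta,c)$ from Lemma \ref{lem:MainD} directly into the representation (\ref{eq:MainL}) of $\Delta(c)$ furnished by Lemma \ref{lem:MainL}, handling the numerator and denominator separately. In the regime $\beta<1$ or $(\beta,\gamma)=(1,+\infty)$ exactly the first two branches of (\ref{eq:D_k_asy}) apply: the $\theta>0$ branch governs the numerator (where $\theta(x,v)\ge\theta_*>0$) and the $\theta=0$ branch governs the denominator. Before either substitution I would record the elementary but decisive fact that $h_c:=c^{2(1-\beta)}\ell_0(c^2)\to+\infty$ as $c\to+\infty$: for $F_{R_n}\in\mathcal L$ one has $\int_{x_0}^x q(t)\,\dd t\to+\infty$ (Proposition \ref{prop:L}), and Karamata's theorem identifies $h_c$ with this divergent integral up to a constant. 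This divergence is what drives the whole argument.

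For the denominator $\sum_{k=1}^{m+1} D_k(0,c)\int_{\mathcal X} w_k\,\dd x\,\dd v$, I would insert the $\theta=0$ branch of (\ref{eq:D_k_asy}), which attaches to the $k$th summand the factor $h_c^{-(n-k)/2}$. Since $h_c\to+\infty$ and the exponent $(n-k)/2$ is strictly decreasing in $k$, the term $k=m+1$ strictly dominates all others, so the denominator is asymptotically $\tfrac{\Gamma(\frac{n-m-1}{2})}{B(\frac{m+1}{2},\frac{n-m-1}{2})}\,h_c^{-(n-m-1)/2}\int_{\mathcal X} w_{m+1}\,\dd x\,\dd v$. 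The remaining task here is to evaluate $\int_{\mathcal X} w_{m+1}\,\dd x\,\dd v$. From the definition of $w_k$, only the top stratum $d=m$ contributes when $k=m+1$, and $\mathrm{tr}_0 H_m\equiv 1$; for $x$ in the interior $m$-stratum the fibre $S(N_xM)$ is a full $(n-m-2)$-sphere of total measure $\Omega_{n-m-1}$, so integrating the fibre out cancels the $\Omega_{n-m-1}$ in the normalizing constant and leaves $\int_{\mathcal X} w_{m+1}\,\dd x\,\dd v=\mathrm{Vol}_m(M)/\Omega_{m+1}$ (using that the lower strata are $\mathrm{Vol}_m$-null).

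For the numerator I would substitute the $\theta>0$ branch, which factors as $D_k(\theta,c)\sim G_k(\theta,c)/\{B(\tfrac k2,\tfrac{n-k}2)\,h_c\}$, so that the common factor $h_c^{-1}$ pulls out of the sum and the integrands become $w_k G_k$. Forming the ratio of numerator to denominator and combining the powers $h_c^{-1}$ and $h_c^{(n-m-1)/2}$ into $h_c^{(n-m-3)/2}$ then yields exactly (\ref{eq:MainS*}). The one genuinely delicate point — and the step I expect to be the main obstacle — is justifying that the pointwise asymptotics of $D_k$ may be carried inside the integral over $\mathcal X$, i.e.\ that $\int_{\mathcal X} w_k D_k(\theta(\cdot),c)\,\dd x\,\dd v\sim \{B(\tfrac k2,\tfrac{n-k}2)h_c\}^{-1}\int_{\mathcal X} w_k G_k(\theta(\cdot),c)\,\dd x\,\dd v$. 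This requires the relative error in (\ref{eq:D_k_asy}) to be $o(1)$ \emph{uniformly} in $\theta$ over the range of $\theta(x,v)$, which is precisely what the uniform remainder estimates (\ref{eq:rbeta_def-o}) and (\ref{eq:rbeta_def-o1}) provide. Uniformity is available because Assumption \ref{as:M} guarantees positive reach $\theta_*>0$, so $\theta(x,v)\in[\theta_*,\pi/2]$ is bounded away from $0$ and the factor $\sin^{n-k-2}\theta$ (whose exponent may be negative) stays bounded; absolute integrability of $w_k$ then lets one dominate the integrated error by $\sup_\theta|\text{error}|\cdot\int_{\mathcal X}|w_k|G_k$, completing the interchange and hence the proof.
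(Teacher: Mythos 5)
Your proposal is correct and follows essentially the same route as the paper's proof: substitute the asymptotics of $D_k(\theta,c)$ from Lemma \ref{lem:MainD} into \eqref{eq:MainL}, note that the $k=m+1$ term dominates the denominator because it carries the smallest power of $c^{2(1-\beta)}\ell_0(c^2)$, and identify $\int_{\mathcal X}w_{m+1}\,\dd x\,\dd v=\mathrm{Vol}_m(M)/\Omega_{m+1}$. Your added discussion of the uniformity (in $\theta\in[\theta_*,\pi/2]$) needed to pass the pointwise asymptotics through the integral over $\mathcal{X}$ is a legitimate elaboration of a step the paper leaves implicit, not a different method.
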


\begin{proof}
We substitute the asymptotics for $D_k(\theta,c)$ from (\ref{eq:D_k_asy}) into (\ref{eq:MainL}).
In the denominator on the right-hand side of (\ref{eq:MainL}), the term with $k=m+1$ dominates as $c\to +\infty$.
Since
$\mathrm{Vol}_{m}(M)=\frac{1}{\Omega_{n-m+1}}\int_{\partial M_m\times S(N_x M)}\dd x\dd v$,
the denominator is asymptotically equivalent to
\begin{align*}
D_{m+1}(0,c)\int_{\mathcal{X}} w_{m+1}(x,v)\,\dd x\,\dd v
= \frac{\Gamma\bigl(\frac{n-m-1}{2}\bigr)}{B\bigl(\frac{m+1}2,\frac{n-m-1}2\bigr) \{c^{2(1-\beta)} \ell_0(c^2)\}^{\frac{n-m-1}{2}}} \frac{\mathrm{Vol}_{m}(M)}{\Omega_{m+1}}.
\end{align*}
\end{proof}
One can see that
the right-hand side of \eqref{eq:MainS*} tends to zero exponentially fast with leading term
\[
 O\left( \exp\left\{ -c^{2(1-\beta)} \ell_0\left(c^2\right) g_\beta(\cos^2\theta_{*}) \right\} \right),
\] 
so the tube formula approximation is tail-valid in this case.

\subsection{Bonferroni approximation}
\label{subsec:Bonferr}

We apply Theorems \ref{thm:MainRV} and \ref{thm:MainS*} to the case of a finite index set $M=\{u_1,\ldots,u_N\}\subset\S^{n-1}$.
The relative error $\Delta(c)$ now simplifies to
\begin{equation}
\label{delta_c-dim0}
 \Delta(c) = \frac{\sum_{1\le i\le N}\Pr(T_i \ge c) -\Pr\Bigl(\max_{1\le i\le N}T_i \ge c\Bigr)}{\sum_{1\le i\le N}\Pr(T_i \ge c)},
\end{equation}
where $T_i=\langle u_i,\xi\rangle$.

For each point $u_i\in M$, the normal cone $N_{u_i}M$ is the linear subspace orthogonal to $u_i$, hence 
\[
 S(N_{u_i}M)=\{ v \mid \langle v,u_i\rangle=0,\ \Vert v\Vert=1 \}.
\]
In the statements below, we replace the integral $\Omega_{n-1}^{-1}\int_{S(N_{u_i}M)}\,\cdot\ \dd v$ by the expectation on $V_i\sim\mathrm{Unif}(S(N_{u_i}M))$.

\begin{theorem}
\label{thm:MainRV_dim0}
For $F_{R_n}\in\mathrm{RV}_{-\gamma}$ with $\gamma\in (0,+\infty)$ one has
\begin{equation}
\label{D-RV}
 \Delta(c) \sim \frac{\sum_{i=1}^N \Pr\Bigl(\widetilde B<\cos^2 \theta(x_i,V_i)\Bigr)}{N}
\end{equation}
as $c\to +\infty$, where $\widetilde B$ is the random variable distributed as the beta distribution $B_{\gamma+\frac{1}{2},\frac{n-1}{2}}$.
Moreover, it holds
\begin{equation}
\label{D-RV_bound}
 0<\lim_{c\to +\infty}\Delta(c)<\bar\Delta := \Pr\Bigl(\widetilde B<\cos^2\theta_*\Bigr),
\end{equation}
that is, the Bonferroni approximation is not tail-valid in this case.
\end{theorem}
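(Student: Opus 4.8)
The plan is to derive the asymptotic identity \eqref{D-RV} by specializing the general regularly varying result of Theorem \ref{thm:MainRV} to $m=\dim M=0$, and then to obtain the two-sided bound \eqref{D-RV_bound} from the strict monotonicity of the beta distribution function together with the geometry of the local critical radius $\theta(u_i,v)$.

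First I would set $m=0$ in \eqref{eq:MainRV}. The outer sum over $k=1,\ldots,m+1$ then reduces to the single index $k=1$; as computed in Section \ref{sec:bonferroni} one has $w_1(x,v)=1/(\Omega_1\Omega_{n-1})$ with $H_0(x,v)=1$, while $\widetilde B_{\gamma,1}\sim B_{\gamma+\frac12,\frac{n-1}2}$ is exactly the variable $\widetilde B$ of the statement. The coefficient $a_{\gamma,1}$ occurs in both numerator and denominator of \eqref{eq:MainRV} and cancels. The fibre space is $\mathcal{X}=\bigsqcup_{i=1}^{N}\bigl(\{u_i\}\times S(N_{u_i}M)\bigr)$ with $S(N_{u_i}M)\cong\S^{n-2}$ and $\dd x$ the counting measure on the $N$ points. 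Writing $\Omega_{n-1}^{-1}\int_{S(N_{u_i}M)}\cdot\,\dd v$ as the expectation over $V_i\sim\mathrm{Unif}(S(N_{u_i}M))$, as announced before the theorem, the denominator becomes $N/\Omega_1$ and the numerator $\Omega_1^{-1}\sum_{i=1}^N\E\bigl[\Pr(\widetilde B<\cos^2\theta(u_i,V_i))\bigr]$; the common factor $\Omega_1^{-1}$ cancels and \eqref{D-RV} follows.

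For the upper bound I would use that $y\mapsto\Pr(\widetilde B<y)$ is strictly increasing on $(0,1)$ because $\widetilde B$ has a density of full support there. By definition of the reach, $\theta(u_i,v)\ge\theta_*$ for every $i$ and every $v\in S(N_{u_i}M)$, so $\cos^2\theta(u_i,v)\le\cos^2\theta_*$ and $\Pr\bigl(\widetilde B<\cos^2\theta(u_i,v)\bigr)\le\Pr(\widetilde B<\cos^2\theta_*)=\bar\Delta$ pointwise; averaging over $i$ and $V_i$ gives $\lim_{c\to+\infty}\Delta(c)\le\bar\Delta$. Strictness follows from the Voronoi picture of Remark \ref{rem:voronoi}: the minimum $\theta(u_i,v)=\theta_*$ is attained only at the isolated direction toward the nearest neighbour, hence $v\mapsto\theta(u_i,v)$ strictly exceeds $\theta_*$ on a set of positive surface measure, so the averaged probability is strictly below $\bar\Delta$.

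The lower bound $\lim_{c\to+\infty}\Delta(c)>0$ reduces to the positivity of the numerator in \eqref{D-RV}, i.e.\ to showing $\Pr\bigl(\widetilde B<\cos^2\theta(u_i,V_i)\bigr)>0$ for some $i$. From the explicit formula $\tan\theta(u_i,v)=\bigl(\max_{j\ne i}\langle u_j,v\rangle/(1-\langle u_i,u_j\rangle)\bigr)^{-1}$ one sees that $\cos^2\theta(u_i,v)>0$ exactly when $\langle u_j,v\rangle>0$ for some $j\ne i$, which holds on a set of directions of positive measure whenever the points are distinct; on that set the beta probability is strictly positive by full support. I expect the main obstacle to lie not in the asymptotic identity but in these strictness claims: one must argue via the geometry of Remark \ref{rem:voronoi} and the $\tan\theta$ formula that the directions where $\theta(u_i,v)=\theta_*$ and where $\theta(u_i,v)=\pi/2$ each form only a measure-zero (respectively a proper) subset of $S(N_{u_i}M)$.
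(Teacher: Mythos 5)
Your proposal is correct and follows essentially the same route as the paper: specialize \eqref{eq:MainRV} to $m=0$ so that only $k=1$ survives, use $w_1(x,v)=1/(\Omega_1\Omega_{n-1})$, cancel $a_{\gamma,1}$ and the constant $1/\Omega_1$, and rewrite the normalized integral over $S(N_{u_i}M)$ as an expectation over $V_i$. The paper's proof stops at \eqref{D-RV} and leaves the bounds \eqref{D-RV_bound} unargued, so your additional monotonicity/strictness discussion (beta c.d.f.\ strictly increasing, $\theta(u_i,v)\ge\theta_*$ with equality only on a null set of directions, positivity from $\langle u_j,v\rangle>0$ on a set of positive measure) is a sound and welcome supplement rather than a deviation.
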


\begin{proof}
When $m=0$, the integral $\int_{M} \dd x$ means the summation $\sum_{x\in M}$, and $\Omega_1=2$.
The numerator in (\ref{eq:MainRV}) becomes
\[
 a_{\gamma,1}\sum_{i=1}^N \int_{S(N_{x_i}M)} \Pr\Bigl(\widetilde B_{\gamma,1}<\cos\theta(x_i,v)\Bigr)\,\dd v/(2\Omega_{n-1}) = a_{\gamma,1}\sum_{i=1}^N \Pr\Bigl(\widetilde B_{\gamma,1}<\cos\theta(x_i,V_i)\Bigr)/2.
\]
Since $\mathrm{Vol}_0(M)=\int_{M} \dd x=N$, the denominator in (\ref{eq:MainRV}) becomes $a_{\gamma,1} N/2$.
\end{proof}

\begin{remark}
\label{rem:bound}
 The bounds in (\ref{D-RV_bound}) imply that 
\begin{equation}
\label{bound}
(1-\bar\Delta) N \Pr(T_1 \ge c) < \Pr\biggl(\max_{1\le i\le N}T_i \ge c\biggr) < N \Pr(T_1 \ge c)
\end{equation}
for large $c$.
The performance of this bound will be examined through numerical experiments in Section \ref{sec:num}.
\end{remark}

The proof of the following theorem is provided in Section \ref{sec:proof}.
\begin{theorem}
\label{thm:MainS*_dim0}
For $F_{R_n}\in\mathcal{L}_{\beta,\gamma}$ with $\beta<1$ or $(\beta,\gamma)=(1,+\infty)$,
one has
\begin{equation*}
 \Delta(c) \sim \frac{D}{N}\frac{\cos^{n(1-\beta)}\theta_*}{2\sqrt{\pi}\tan\theta_*}
 \Bigl\{c^{2(1-\beta)}\ell_0\left(c^2\right)\Bigr\}^{-\frac{1}{2}}
 \exp\Bigl\{ -c^{2(1-\beta)} \ell_0\left(c^2\right) g_\beta(\cos^2\theta_*) -r_\beta(c^2,\cos^2\theta_*)\Bigr\}
\end{equation*}
as $c\to +\infty$, where
\begin{equation}
\label{D}
 D = \sharp\bigl\{ (i,j) \mid \rho_{ij} = \rho_{*} \bigr\}.
\end{equation}
Equivalently,
\begin{align*}
 \log\Delta(c)
 =& -c^{2(1-\beta)} \ell_0\bigl(c^2\bigr) g_\beta(\cos^2\theta_*) 
 - \frac{1}{2} \log\Bigl\{ c^{2(1-\beta)}\ell_0\bigl(c^2\bigr) \Bigr\}
 - r_\beta\bigl(c^2,\cos^2\theta_*\bigr) \\
 & +\log\frac{\cos^{n(1-\beta)}\theta_*}{2\sqrt{\pi}\tan\theta_*} +\log\frac{D}{N} +o(1)
\end{align*}
as $c\to +\infty$.
Since $r_\beta\bigl(c^2,\cos^2\theta_*\bigr)=o\bigl(c^{2(1-\beta)}\ell_0(c^2)\bigr)$,
the Bonferroni approximation is tail-valid.
\end{theorem}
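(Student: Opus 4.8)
**Plan for proving Theorem \ref{thm:MainS*_dim0}.**

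The plan is to specialize the general subexponential asymptotics of Theorem \ref{thm:MainS*} to the finite set $M=\{u_1,\dots,u_N\}$, where $m=\dim M=0$, and then extract the dominant contribution to the numerator. When $m=0$ we have only the single stratum with $k=1$, the integral $\int_M\,\dd x$ becomes the sum $\sum_{i=1}^N$, the weight $w_1$ reduces to a constant, and $\mathrm{Vol}_0(M)=N$. Substituting these into \eqref{eq:MainS*} collapses the denominator to a multiple of $N$ and turns the numerator into $\frac{1}{\Omega_{n-1}}\sum_{i=1}^N\int_{S(N_{u_i}M)} G_1(\theta(u_i,v),c)\,\dd v$, i.e.\ an expectation over $V_i\sim\mathrm{Unif}(S(N_{u_i}M))$. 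The prefactor $\{c^{2(1-\beta)}\ell_0(c^2)\}^{(n-m-3)/2}$ becomes $\{c^{2(1-\beta)}\ell_0(c^2)\}^{(n-3)/2}$.

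First I would write out $G_1(\theta,c)=\cos^{3-2\beta}\theta\,\sin^{n-3}\theta\,\exp\{-c^{2(1-\beta)}\ell_0(c^2)\,g_\beta(\cos^2\theta)-r_\beta(c^2,\cos^2\theta)\}$ and observe that, as $c\to+\infty$, the exponential factor $\exp\{-c^{2(1-\beta)}\ell_0(c^2)\,g_\beta(\cos^2\theta)\}$ decays fastest where $g_\beta(\cos^2\theta)$ is largest; since $g_\beta$ is decreasing (as $g_\beta'(y)=-y^{\beta-2}<0$), this means the integral is dominated by the region where $\cos^2\theta$ is \emph{largest}, i.e.\ where $\theta(u_i,v)$ is \emph{smallest}. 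The global minimizer is $\theta_*$, attained precisely at those pairs $(i,j)$ with $\rho_{ij}=\rho_*$ and in the direction $v$ pointing toward the nearest neighbor (cf.\ Remark \ref{rem:voronoi}). The number of such contributing configurations is exactly $D=\sharp\{(i,j)\mid\rho_{ij}=\rho_*\}$ from \eqref{D}.

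The main obstacle, and the technical heart of the argument, is a Laplace-type asymptotic evaluation of $\int_{S(N_{u_i}M)} G_1(\theta(u_i,v),c)\,\dd v$ near the minimizing direction $v$. I would parametrize a neighborhood of that direction, Taylor-expand $\theta(u_i,v)$ around its minimum $\theta_*$ (where the gradient vanishes, so the leading correction is quadratic in the angular deviation), and apply Laplace's method; the Gaussian integration over the $(n-2)$-dimensional sphere $S(N_{u_i}M)$ produces a factor of order $\{c^{2(1-\beta)}\ell_0(c^2)\}^{-(n-2)/2}$ together with $\sin^{n-3}\theta_*$ and the Hessian determinant of $\theta(u_i,\cdot)$ at its minimum. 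Combining this with the surviving prefactor $\{c^{2(1-\beta)}\ell_0(c^2)\}^{(n-3)/2}$ yields the overall power $\{c^{2(1-\beta)}\ell_0(c^2)\}^{-1/2}$ appearing in the statement, while the remaining geometric constants must be checked to assemble into $\cos^{n(1-\beta)}\theta_*/(2\sqrt\pi\tan\theta_*)$. The slowly varying correction is controlled through the uniform estimate \eqref{eq:rbeta_def-o1}, which guarantees $r_\beta(c^2,\cos^2\theta_*)=o(c^{2(1-\beta)}\ell_0(c^2))$ so that it does not disturb the leading exponential rate. Taking logarithms then gives the additive expansion, and the final tail-validity claim $\Delta(c)\to0$ follows since the dominant term $-c^{2(1-\beta)}\ell_0(c^2)\,g_\beta(\cos^2\theta_*)\to-\infty$ (because $\theta_*<\pi/2$ forces $g_\beta(\cos^2\theta_*)>0$).
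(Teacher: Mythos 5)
Your proposal follows essentially the same route as the paper's proof: specialize \eqref{eq:MainS*} to $m=0$ so the denominator collapses to a multiple of $N$, then evaluate $\int_{S(N_{u_i}M)}G_1(\theta(u_i,v),c)\,\dd v$ by the Laplace method around the direction $v_0$ pointing toward the nearest neighbor, where $\theta(u_i,\cdot)$ attains $\theta_*$ with a quadratic expansion in the angular deviation, count the $D$ contributing pairs, and control $r_\beta$ uniformly via \eqref{eq:rbeta_def-o1}. The power bookkeeping $b^{(n-3)/2}\cdot b^{-(n-2)/2}=b^{-1/2}$ and the concluding tail-validity argument are exactly those of the paper, so the plan is correct and not materially different.
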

Now apply Theorem \ref{thm:MainS*_dim0} to some of the distributions from Example \ref{ex:L}.
\begin{example}
\label{ex:three_dist}

\begin{enumerate}[label={\rm (\roman*)}]

\item
Suppose that ${R_n}$ follows the log-normal distribution $\exp\mathcal{N}(0,1)$ with tail probability function (\ref{log_normal}).
Setting $\beta=1$, $\ell(x)=\log x+1/\log x$, $\ell_0(x)=\log x$ and $g_1(y)=-\log y$, we obtain
\[
 r_1(h,y)=\int_1^{y^{-1}}\left\{ \log(h v)+\frac{1}{\log(h v)} -\log h \right\}v^{-1}\dd v = \frac{1}{2}(\log y)^2 + o(1), \quad h\to +\infty,
\]
and
\begin{equation}
\begin{aligned}
\label{logD-lognormal}
 \log\Delta(c) =& -\log(c^2)(-\log\cos^2\theta_*) - \frac{1}{2}\log \log(c^2) \\
 & - \frac{1}{2}(\log\cos^2\theta_*)^2 + \log\frac{1}{2\sqrt{\pi}\tan\theta_*}
 + \log \frac{D}{N} +o(1), \quad c\to +\infty.
\end{aligned}
\end{equation}

\item
Suppose that ${R_n}\sim \chi^2_n$ with tail probability function (\ref{chi2}).
Setting $\beta=0$, $\ell(x)=(1/2)-(n-2)/(2x)$, $\ell_0(x)=1/2$, $g_0(y)=y^{-1}-1$, we obtain
\[
 r_0(h,y) = h\int_1^{y^{-1}}\left(-\frac{n-2}{2 h v}\right) \dd v = \frac{n-2}{2}\log y
\]
and
\begin{equation}
\label{logD-Gauss}
 \log\Delta(c) = -(1/2)c^{2}\tan^2\theta_* -\frac{1}{2}\log(c^2/2)
 + \log\frac{\cos^{2}\theta_*}{2\sqrt{\pi}\tan\theta_*}
 + \log \frac{D}{N} +o(1)
\end{equation}
as $c\to +\infty$, which was shown in \cite{kuriki-takemura:2001,taylor-takemura-adler:2005}.

\item
Suppose that ${R_n}$ is Bessel-distributed with tail probability function (\ref{Bessel}).
Setting $\beta=1/2$, $\ell(x)=(1/2)-(\nu_1+\nu_2-3)/(4\sqrt{x})$, $\ell_0(x)=1/2$ and
$g_{1/2}(y)=2(y^{-1/2}-1)$, we obtain
\begin{align*}
 r_{1/2}(h,y) = h^{1/2}\int_{1}^{y^{-1}}\left(-\frac{\nu_1+\nu_2-3}{4}\frac{1}{\sqrt{h v}}\right) v^{-1/2}\,\dd v
= \frac{\nu_1+\nu_2-3}{4} \log y
\end{align*}
and
\begin{equation}
\label{logD-Bessel}
 \log\Delta(c) = -c(\sec\theta_*-1)
 - \frac{1}{2} \log (c/2)
 + \log\frac{\cos^{\frac{n-\nu_1-\nu_2+3}{2}}\theta_*}{2\sqrt{\pi}\tan\theta_*}
 + \log\frac{D}{N} +o(1)
\end{equation}
as $c\to +\infty$.
\end{enumerate}
\end{example}

\begin{remark}
Let $(T_1,\ldots,T_N)$ be a centered Gaussian random vector with covariance matrix $\Sigma=(\rho_{ij})$ and unit variances $\rho_{ii}=1$.
The density function of $(T_1,\ldots,T_N)$ is denoted by $p_T(x_1,\ldots,x_N;\Sigma)$.
The multivariate Mills' ratio \cite{savage:1962} states that
\begin{equation}
\label{mills}
 \Pr(\min\{T_1,\ldots,T_N\} \ge c) \sim \frac{p_T(c,\ldots,c;\Sigma)}{c^N \prod_{i}(\Sigma^{-1} \1)_i}
 \asymp c^{-N}\exp\left(-\frac{c^2}{2} \1^\top\Sigma^{-1}\1\right),
\end{equation}
where $\1=(1,\ldots,1)^\top$.
For any partition $\Sigma=\begin{pmatrix} \Sigma_{11} & \Sigma_{12} \\ \Sigma_{21} & \Sigma_{22} \end{pmatrix}$, 
$\Sigma^{-1}-\begin{pmatrix} \Sigma_{11}^{-1} & 0 \\ 0 & 0\end{pmatrix}$ is non-negative definite, and $\1^\top\Sigma^{-1}\1\ge\1^\top\Sigma_{11}^{-1}\1$ holds.
Hence, for any $k=1,\ldots, N$ we have
\[
 \Pr(\min\{T_1,\ldots,T_{k}\} \ge c)/\Pr(\min\{T_1,\ldots,T_{k-1}\} \ge c)=o(1), \quad c\to +\infty.
\]
Therefore, in the right-hand side of the inclusion-exclusion formula
\begin{align*}
& \Pr\Bigl(\max_{i=1,\ldots,N} T_i \ge c\Bigr) - \sum_{i=1}^N \Pr(T_i \ge c) \\
& = -\sum_{i<j} \Pr(\min\{T_i,T_j\} \ge c) +\sum_{i<j<k} \Pr(\min\{T_i,T_j,T_k\} \ge c) - \cdots,
\end{align*}
the first term dominates as $c\to +\infty$,
and the relative error of the tube formula for the maximum $\max\{T_1,\ldots,T_N\}$ is
\[
 \Delta(c) = \frac{N\Pr(T_1 \ge c)-\Pr(\max\{T_1,\ldots,T_N\} \ge c)}{N\Pr(T_1 \ge c)} \sim \frac{\sum_{1\le i<j\le N}\Pr(\min\{T_i,T_j\} \ge c)}{N\Pr(T_1 \ge c)}.
\]
By applying the Mills's formula (\ref{mills}), we have
\[
\Pr(\min\{T_i,T_j\} \ge c)\sim \frac{\frac{1}{2\pi\sqrt{1-\rho_{ij}^2}}\,e^{-\frac{c^2}{1+\rho_{ij}}}}{c^2\bigl(1+\rho_{ij}\bigr)^{-2}} \quad\mbox{and}\quad
 \Pr(T_1 \ge c) \sim \frac{1}{c\sqrt{2\pi}}e^{-\frac{1}{2}c^2},
\]
as $c\to +\infty$, and hence
\[
  \Delta(c) \sim \frac{D}{N} \frac{1}{\sqrt{2\pi}}\left(\frac{1+\rho_*}{2}\right)\left(\frac{1-\rho_*}{1+\rho_*}\right)^{-\frac{1}{2}} c^{-1}\exp\left(-\frac{c^2}{2}\frac{1-\rho_*}{1+\rho_*}\right),
\]
where $\rho_*=\max_{i\neq j}\rho_{ij}$ and $D$ is defined in (\ref{D}).
This is equivalent to (\ref{logD-Gauss}).
\end{remark}

\begin{remark}
\label{rem:copula}
For $N=2$, the relative error $\Delta(c)$ in (\ref{delta_c-dim0}) can be interpreted as a measure of upper tail dependence in the random vector $(T_1,T_2)$ as follows.
%
Let $F$ be the common cumulative distribution function of $T_i$ and $F^{-1}$ its quantile function, $i=1,2$.

%
In extreme value theory, the upper tail dependence coefficient is defined as $\lambda_U:=\lim_{u\uparrow 1} \lambda(u)$, where
\[
 \lambda(u) := 
 \Pr\bigl( T_1 \ge F^{-1}(u)\,|\,T_2 \ge F^{-1}(u)\bigr), \quad u\in[0,1],
\]
see \cite[Definition 2.3]{hult-lindskog:2002}.
The random variables $T_i$, $i=1,\,2$ are said to be upper tail-independent if $\lambda_U=0$, 
and upper tail-dependent, if $\lambda_U>0$.

In our context, setting $c=F^{-1}(u)\to +\infty$ as $u\uparrow 1$, we write
\begin{align*}
\lim_{c\to +\infty }\Delta(c)
= \lim_{u\uparrow 1 }\Delta(F^{-1}(u))
= \lim_{u\uparrow 1 } \frac{\Pr(\min\{ T_1,T_2 \} \ge F^{-1}(u))}{2 \Pr(T_1 \ge F^{-1}(u))}
= \lim_{u\uparrow 1 } \frac{\lambda(u)}{2} = \frac{\lambda_U}{2}.
\end{align*}
Hence, tail-validity $\lim_{c\to +\infty}\Delta(c)=0$ of the Bonferroni approximation with $N=2$ is equivalent to upper tail-independence of $T_1$ and $T_2$. 

\end{remark}

\section{Numerical analysis for finite index sets $M$}
\label{sec:num}

We conduct numerical studies to examine whether the asymptotic results for the relative approximation error $\Delta(c)$ as $c\to +\infty$ from Section \ref{subsec:Bonferr} perform well also for moderate values of $c$.
We set the number $N$ of test statistics $T_i$ to be $N=3$, where $T_i=\langle u_i,\xi\rangle$ with $\xi\in\R^3$ (i.e., $n=3$) being a spherically contoured random vector and $u_i\in\S^2$, $i=1,2,3$.
Thus, $(T_i)_{i=1,2,3}$ is a finite spherically contoured random field.
Its correlation structure $\rho_{ij}=\langle u_i,u_j\rangle$, $i,j=1,2,3$ is set to be
$\rho_{ii}=1$ and $\rho_{ij}=1/4$, $i\ne j$.
Then, the critical radius in (\ref{theta*}) is
\[
 \theta_* = \cos^{-1}\sqrt{5/8}.
\]
The multiplicity $D$ defined in (\ref{D}) is equal to $6$.

For the distribution of
$R_3 = \Vert\xi\Vert^2 = \sum_{i,j=1}^3 T_i T_j \rho^{ij}$,
where $(\rho^{ij})=(\rho_{ij})^{-1}$,
we consider four probability laws  from Example \ref{ex:L}.
Figures \ref{fig:t}, \ref{fig:lognormal}, \ref{fig:Bessel}, and \ref{fig:gauss} show the logarithm of the tail probability $\log\Pr(T_{\max}>c)$ and the logarithm of its relative error $\Delta(c)$.

In Figure \ref{fig:t}, we set $(T_i)_{i=1,2,3}$ to be proportional to the trivariate $t$-random vector with $\nu=3$ degrees of freedom.
This is implemented by letting $s$ in (\ref{Xi_u-again}) be distributed as $s^2\sim \chi^2_3$ or $R_3=\Vert\xi\Vert^2\sim F_{3,3}$.
Note that $\E[R_3]=3$.
The left panel shows $\log\Pr(T_{\max}>c)$ by simulation (in blue) and exact calculation (in green), as well as its Bonferroni approximation (in red).
Even when the threshold $c$ increases, the difference (i.e., asymptotic bias) between the simulated/exact values and the Bonferroni approximation does not disappear,
indicating that the Bonferroni approximation is not tail-valid.
However, this bias is well captured by the lower bound (\ref{bound}) (in gray) even if $c$ is not necessarily large.
\begin{figure}[ht]
\begin{center}
\scalebox{0.65}{\includegraphics{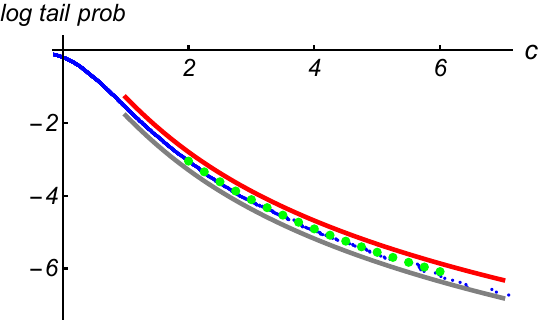}}\qquad\qquad
\scalebox{0.65}{\includegraphics{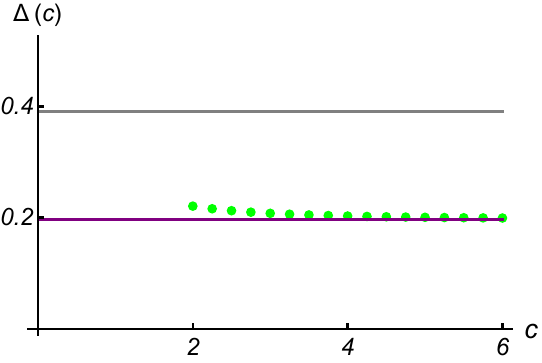}}
\caption{Values of $\log\Pr(T_{\max}>c)$ (left) and $\Delta(c)$ (right) for the $t_3$-distributed random field $(T_i)_{i=1,2,3}$.}
\label{fig:t}
\smallskip
\small
\begin{tabular}{l@{\ }l}
Left: & Simulation with $10000$ trials {\color{blue}(blue)}, Bonferroni approximation {\color{red}(red)}, exact {\color{green} (green)}, \\ 
& lower bound in (\ref{bound}) (gray) \\
Right:& Asymptotic bias $\lim_{c\to +\infty}\Delta(c)$ in (\ref{D-RV}) {\color{purple}(purple)}, upper bound $\bar\Delta$ in (\ref{D-RV_bound}) {\color{black}(gray)}, \\
      & exact {\color{green}(green)}
\end{tabular}
\end{center}
\end{figure}
The right panel of Figure \ref{fig:t} displays the relative error $\Delta(c)$.
When $c\to +\infty$, $\Delta(c)$ converges to a positive constant (in purple) thus validating the result of Theorem \ref{thm:MainRV_dim0}.
Also, $\Delta(c)$ is bounded from above by a constant $\bar\Delta$ (in gray).

In Figure \ref{fig:lognormal}, we set the distribution of $R_3$ to be the multiple $3 e^{-1/2} \mathrm{exp}\,\mathcal{N}(0,1)$ of the Log-normal distribution. Note that $\E[R_3]=3$.
Differently from Figure \ref{fig:t} and well in accordance with Theorem \ref{thm:MainS*_dim0}, the asymptotic bias of the Bonferroni approximation is not observed in the left panel.

\begin{figure}[ht]
\begin{center}
\scalebox{0.65}{\includegraphics{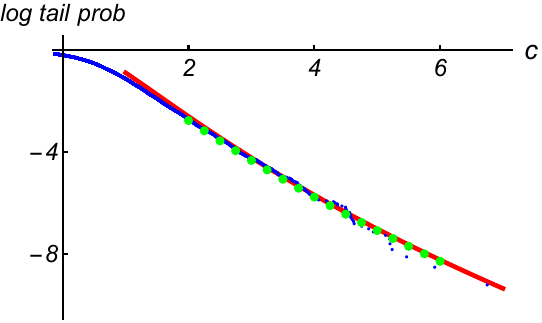}}\qquad\qquad
\scalebox{0.65}{\includegraphics{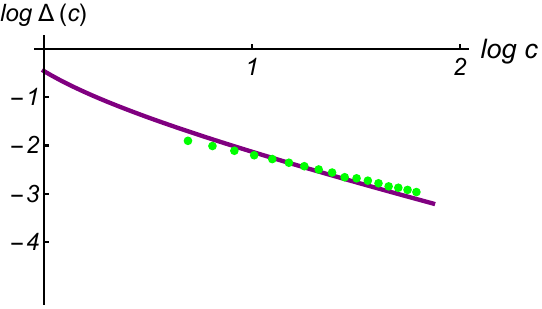}}
\caption{Values of $\log\Pr(T_{\max}>c)$ (left) and $\Delta(c)$ (right) for the lognormal-distributed random field $(T_i)_{i=1,2,3}$.
}
\label{fig:lognormal}
\smallskip
\small
\begin{tabular}{l@{\ }l}
Left: & Simulation with $10000$ trials {\color{blue}(blue)}, Bonferroni approximation {\color{red}(red)}, exact {\color{green} (green)} \\ 
Right:& Asymptotics in (\ref{logD-lognormal}) {\color{purple}(purple)}, exact {\color{green}(green)} 
\end{tabular}
\end{center}
\end{figure}
The right panel of Figure \ref{fig:lognormal} displays the graph of $(\log c,\log\Delta(c))$.
The vertical values $\log\Delta(c)$ are computed using the formula (\ref{logD-lognormal}) with $c:=c\times (\sqrt{e}/3)^{\frac{1}{2}}$.
The relationship between $\log c$ and $\log\Delta(c)$ is asymptotically linear with a slope of $(-2)\times(-\log\cos^2\theta_*)=2\log(5/8)\approx -0.94$ when $c\to +\infty$.
The asymptotic formula (\ref{logD-lognormal}) for $\Delta(c)$ (in purple) approximates the exact values (in green) well.
The Bonferroni approximation in this setting is tail-valid.

In Figure \ref{fig:Bessel}, we set  
$R_3=\Vert\xi\Vert^2\sim 4^{-1}\chi_3^2\chi_4^2$ to be a multiple of the Bessel distribution with degrees of freedom $3$ and $4$. 
This corresponds to the case where $s$ in (\ref{Xi_u-again}) has the distribution $s^{-2}\sim\chi^2_4/4$ and ensures $\E[R_3]=3$.
The values
$\log\Delta(c)$ are computed by formula (\ref{logD-Bessel}) with $c:=c\times \sqrt{4}$.

\begin{figure}[ht]
\begin{center}
\scalebox{0.65}{\includegraphics{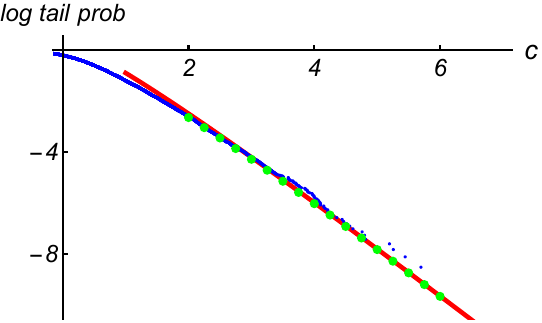}}\qquad\qquad
\scalebox{0.65}{\includegraphics{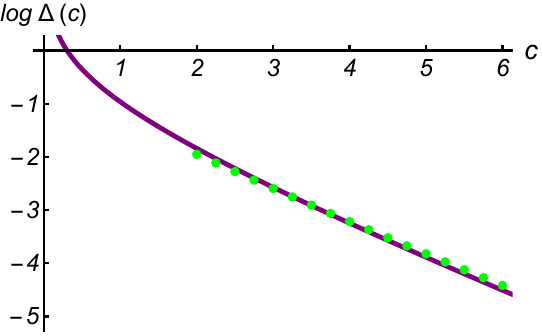}}
\caption{Values of $\log\Pr(T_{\max}>c)$ (left) and $\Delta(c)$ (right) for the Bessel-distributed random field $(T_i)_{i=1,2,3}$.}
\label{fig:Bessel}
\smallskip
\small
\begin{tabular}{l@{\ }l}
Left: & Simulation with $10000$ trials {\color{blue}(blue)}, Bonferroni approximation {\color{red}(red)}, exact {\color{green} (green)} \\ 
Right:& Asymptotics in (\ref{logD-Bessel}) {\color{purple}(purple)}, exact {\color{green}(green)} 
\end{tabular}
\end{center}
\end{figure}

As in Figure \ref{fig:lognormal}, the Bonferroni method provides a good approximation to the simulated/exact values of tail probabilities of the maximum, and the error value $\Delta(c)$ in (\ref{logD-Bessel}) is asymptotically very close to the exact value (in green).
The right panel of Figure \ref{fig:Bessel} shows the graph of $(c,\log\Delta(c))$.
In this scaling, it exhibits asymptotically linear behavior with a slope of $-(\sec\theta_*-1)\times\sqrt{4}=-2\sqrt{8/5}+2\approx -0.53$ as $c\to +\infty$.

Figure \ref{fig:gauss} showcases the trivariate centered Gaussian random vector $(T_i)_{i=1,2,3}$ with $R_3\sim\chi_3^2$.
This corresponds to the case $s\equiv 1$ in (\ref{Xi_u-again}).
Obviously, $\E[R_3]=3$.
Due to tail validity of the Bonferroni approximation, its features closely resemble those displayed in Figures \ref{fig:lognormal} and \ref{fig:Bessel}.
In the right panel, the graph of $(c^2,\log\Delta(c))$ appears almost linear asymptotically with a slope of $-(1/2)\tan^2\theta_*=-3/10$ as $c\to +\infty$.

\begin{figure}[ht]
\begin{center}
\scalebox{0.65}{\includegraphics{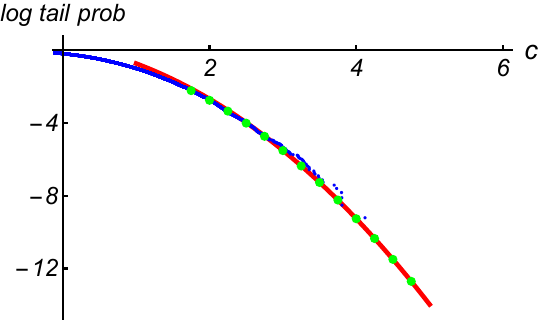}}\qquad\qquad
\scalebox{0.65}{\includegraphics{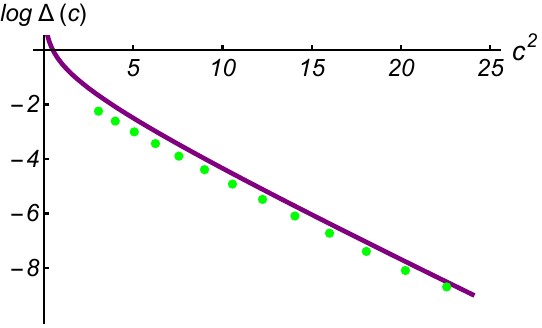}}
\caption{Values of $\log\Pr(T_{\max}>c)$ (left) and $\Delta(c)$ (right) for the Gaussian random field $(T_i)_{i=1,2,3}$.}
\label{fig:gauss}
\smallskip
\small
\begin{tabular}{l@{\ }l}
Left: & Simulation with $10000$ trials {\color{blue}(blue)}, Bonferroni approximation {\color{red}(red)}, exact {\color{green} (green)} \\ 
Right:& Asymptotics in (\ref{logD-Gauss}) {\color{purple}(purple)}, exact {\color{green}(green)} 
\end{tabular}
\end{center}
\end{figure}

\section{Proofs}
\label{sec:proof}

In this section, we prove Lemmas \ref{lem:MainL} and \ref{lem:MainD} as well as Theorem 
\ref{thm:MainS*_dim0} from Section \ref{sec:Main}.

\begin{proof}[Proof of Lemma \ref{lem:MainL}]
By the property of the elliptically contoured laws \cite[Lemma 2]{cambanis-etal:1981}, \cite[Lemmas 2.5, 2.6]{schmidt:2002}, $R_k$ and $\bar R_{n-k}$ have the joint distribution
\[
 (R_k,\bar R_{n-k}) \mathop{=}^d (R_n B_k, R_n (1-B_k)),
\]
where $B_k\sim B_{\frac{k}{2},\frac{n-k}{2}}$ is a random variable independent of $R_n$.
The probabilities in expressions (\ref{tube}) and (\ref{max}) can be written as
\begin{align*}
 \Pr\bigl(R_k\ge c^2, \bar R_{n-k}\le t(x,v) R_k\bigr)
 =& \Pr\bigl(R_n B_k\ge c^2, 1-B_k\le t(x,v) B_k\bigr) \\
 =& \int_{1/(1+t(x,v))}^1 \Pr\bigl(R_n\ge c^2/B_k\,|\, B_k\bigr)\, \dd P_{B_k},
\end{align*}
and
\[
 \Pr\bigl(R_k\ge c^2\bigr)
 = \int_0^1 \Pr\bigl(R_n\ge c^2/B_k\,|\, B_k\bigr)\, \dd P_{B_k},
\]
respectively,
where $\dd P_{B_k}$ is the probability measure of $B_{\frac{k}{2},\frac{n-k}{2}}$.
Then we can express
\begin{equation}
\label{diff}
 P_\mathrm{tube}(c)-P(c) = \sum_{k=1}^{m+1} \int_{\mathcal{X}} w_{k}(x,v) \int_{0}^{\cos^2\theta(x,v)} \Pr\bigl(R_n\ge c^2/B_{k}\,|\, B_{k}\bigr)\, \dd P_{B_{k}}\, \dd x\, \dd v
\end{equation}
and, analogously,
\begin{equation}
\label{Ptube}
 P_\mathrm{tube}(c) = \sum_{k=1}^{m+1} \int_{\mathcal{X}} w_{k}(x,v) 
 \, \dd x\, \dd v \int_{0}^{1} \Pr\bigl(R_n\ge c^2/B_{k}\,|\, B_{k}\bigr)\, \dd P_{B_{k}}.
\end{equation}
Divide both (\ref{diff}) and (\ref{Ptube}) by $\Pr\bigl(R_n\ge c^2\bigr)$ and let $c$ tend to infinity.
Using representation \eqref{eq:FL}, we obtain
\begin{align*}
 I_{n,k}(c,y)
 &:= \frac{ \Pr\bigl(R_n\ge c^2/B_k\,|\, B_{k}=y\bigr)}{\Pr\bigl(R_n\ge c^2\bigr)}
 = \frac{ \Pr\bigl(R_n\ge c^2/y\bigr)}{\Pr\bigl(R_n\ge c^2\bigr)} \\
 &= \frac{C\bigl(c^2/y\bigr)}{C\bigl(c^2\bigr)}\exp\left\{-\int_{c^2}^{c^2/y} q(t)\, \dd t \right\}
\sim \exp\left\{-\int_{c^2}^{c^2/y} q(t)\, \dd t \right\}
\end{align*}
as $c\to +\infty$.
Since the right-hand side is bounded by 1, we may use the Lebesgue theorem on dominated convergence to deduce that
\[
 \int_{0}^{\cos^2 \theta} I_{n,k}\bigl(c, B_{k}\bigr)\, \dd P_{B_{k}} \sim \int_{0}^{\cos^2 \theta} \exp\left\{-\int_{c^2}^{c^2/B_{k}} q(t)\, \dd t \right\}\, \dd P_{B_{k}}=D_k(\theta,c),
\]
\[
 \int_{0}^{1} I_{n,k}\bigl(c, B_{k}\bigr)\, \dd P_{B_{k}} \sim D_k(0,c),\quad c\to +\infty.
\]
A reapplication of the Lebesgue theorem justifies interchanging the integral over $\mathcal{X}$ and the limit as $c\to +\infty$ due to the absolute integrability of the functions $w_{k}$.
The following elementary fact which can be proven by induction completes the proof:
Let functions $A_j, B_j:\R_+\to \R_+$, $j=1,\ldots, l$ be such that $A_j(c)\sim B_j(c)$,
$B_i(c)/B_j(c)\to p_{ij}\in \bar\R$ as $c\to +\infty$ for all $i,j=1,\ldots,l$.
Then $\sum_{j=1}^l A_j(c)\sim \sum_{j=1}^l B_j(c)$ as $c\to +\infty$.
\end{proof}

\begin{proof}[Proof of Lemma \ref{lem:MainD}] 

By Lemma \ref{lem:MainL}, we have to find the asymptotics of $D_k(\theta,c)$ in \eqref{eq:D_k} as $c\to +\infty$.
We prove the two cases
\begin{itemize}
\setlength{\parskip}{0mm}
\setlength{\itemsep}{1mm}
\item[(i)]
$\beta<1$ or $(\beta,\gamma)=(1,+\infty)$; (i-a) $\theta>0$, (i-b) $\theta=0$,

\item[(ii)]
$\beta=1$ and $\gamma<+\infty$,
\end{itemize}
separately.

Case (i).
Using the notation $a:=\cos^2 \theta \in [0,1]$, $h=c^2$, $p=k/2$, $q=(n-k)/2$, write $D_k(\theta,c)$ in (\ref{eq:D_k}) as
\begin{equation}
\label{eq:D_k_mod}
  B(p,q)^{-1} \int_0^a e^{-J} y^{p-1} (1-y)^{q-1}\, \dd y,
\end{equation}
where $J:=\int_{h}^{h/y} \ell(t) t^{-\beta}\, \dd t$.
After the substitution of variables $t=hv$, we may write due to the slow variation of $\ell$ that
\begin{equation}
\label{eq:J}
 J = h^{1-\beta} \int_1^{y^{-1}} \frac{\ell(hv)}{\ell(h)} \ell(h) v^{-\beta}\, \dd v \sim h^{1-\beta} \ell_0(h) g_\beta(y), \quad h\to +\infty,
\end{equation}
where $\ell_0(h)$ is an arbitrary function such that $\ell_0(h)\sim\ell(h)$ as $h\to +\infty$, and the function $g_\beta$ is defined in \eqref{eq:g}.
 Introduce the notation $b:=h^{1-\beta} \ell_0(h)$.
It holds $b \to +\infty$ as $h\to +\infty$.
In our analysis, we need to provide an asymptotic expansion for $J$ up to first order in $b$.
Noting (\ref{eq:J}), we see that
\[
  J - h^{1-\beta} \ell_0(h) g_\beta(y) = h^{1-\beta} \int_1^{y^{-1}} \bigl\{\ell(hv) - \ell_0(h)\bigr\} v^{-\beta}\, \dd v=r_\beta(h,y),
\]
where the function $r_\beta(h,y) $ was introduced in \eqref{eq:rbeta_def}.
We substitute $z=g_\beta(y)$ in the integral \eqref{eq:D_k_mod} after plugging the expression \eqref{eq:J} into it.
Note that $g_\beta$ is a monotonically decreasing function with $g_\beta(0)=+\infty$ and $g_\beta(1)=0$.
Then, we have
\begin{align}
 D_k(\theta,c)
 &\sim B(p,q)^{-1} \int_0^{a} e^{-b g_\beta(y) -r_\beta(h,y)} y^{p-1} (1-y)^{q-1} \, \dd y \nonumber \\
 &= B(p,q)^{-1} \int_{g_\beta(a)}^{+\infty} e^{-b z -r_\beta(h,g^{-1}_\beta(z))} \bigl(g^{-1}_\beta(z)\bigr)^{p-1} \bigl(1-g^{-1}_\beta(z)\bigr)^{q-1} \frac{1}{|g'_\beta(g^{-1}_\beta(z))|}\, \dd z \nonumber \\
 &= B(p,q)^{-1} \int_{g_\beta(a)}^{+\infty} e^{-b z}
 e^{-r_\beta(h,g^{-1}_\beta(z))}
 \bigl(g^{-1}_\beta(z)\bigr)^{p-\beta+1} \bigl(1-g^{-1}_\beta(z)\bigr)^{q-1} \,\dd z.
\label{D_k-tmp}
\end{align}

(i-a) When $g_\beta(a)>0$ (which equivalently means $a<1$ and $\theta>0$), expression (\ref{D_k-tmp}) is asymptotically evaluated integrating by parts as
\[
 D_k(\theta,c)
 \sim B(p,q)^{-1} b^{-1} e^{-b g_\beta(a)} 
 e^{-r_\beta(h,a)}
 a^{p-\beta+1} (1-a)^{q-1}, \quad b\to +\infty.
\]

(i-b) Suppose that $g_\beta(a)=0$ (i.e., $a=1$ or $\theta=0$).
Using the Taylor expansion $g^{-1}_\beta(z) = 1-z +o(z)$ around $z=0$ of order one, we have
\begin{align*}
 e^{-r_\beta(h,g^{-1}_\beta(z))} \bigl(g^{-1}_\beta(z)\bigr)^{p-\beta+1} \bigl(1-g^{-1}_\beta(z)\bigr)^{q-1} \sim z^{q-1}.
\end{align*}
Noting this and $r_\beta(h,1)=0$, we have
\begin{align*}
 D_k(0,c)
 &\sim B(p,q)^{-1} \int_{0}^{+\infty} e^{-b z} z^{q-1}\, \dd z \\
 &=B(p,q)^{-1} b^{-q} \int_{0}^{+\infty} e^{-z} z^{q-1}\, \dd z \\
 &= \frac{\Gamma(q)}{B(p,q) b^{q}} = \frac{\Gamma(p+q)}{\Gamma(p) b^{q}}, \quad b\to +\infty.
\end{align*}

Case (ii).
Recall that $ F_{R_n}\in\mathrm{RV}_{-\gamma}$ here.
Let $q(t)$ be the function from representation \eqref{eq:FL} for $\bar F_{R_n}$, which holds true
since $\mathrm{RV}_{-\gamma}\subset \mathcal{L}$.
By definition of regular variation, it holds 
\[
 \exp\left\{ -\int_{y}^{\lambda y} q(t)\, \dd t \right\} \sim \frac{1}{\lambda^\gamma}, \quad y\to +\infty
\]
for any $\lambda>0$.
Thus, we obtain
\begin{align*}
 D_k(\theta,c) & \sim \int_{0}^{\cos^2\theta} B_{k}^\gamma\, \dd P_{B_{k}} \\
 &= \frac{a_{\gamma,k}}{B\bigl(\gamma+\frac{k}{2},\frac{n-k}{2} \bigr)} \int_{0}^{\cos^2\theta} y^{\gamma+\frac{k}{2}-1} (1-y)^{\frac{n-k}{2}-1}\, \dd y \\
 &= a_{\gamma,k} \Pr\Bigl(\widetilde B_{\gamma,k}<\cos^2 \theta\Bigr), \quad c\to +\infty,
\end{align*}
where $\widetilde B_{\gamma,k}\sim B_{\gamma+\frac{k}{2},\frac{n-k}{2}}$ and $D_k(0,c)= a_{\gamma,k}.$
\end{proof}

\begin{proof}[Proof of Theorem \ref{thm:MainS*_dim0}]

Let $b=c^{2(1-\beta)} \ell_0\bigl(c^2\bigr)$ as before.
Evidently, assumption ``$\beta<1$ or $\beta=1$, $\gamma=\lim_{h\to +\infty}\ell_0(h)=+\infty$'' of Theorem \ref{thm:MainS*_dim0} is equivalent to  $b\to +\infty$ as $c\to +\infty$.
When the dimension $m=0$, the right-hand side of (\ref{eq:MainS*}) rewrites as
\begin{equation}
\label{Delta_c_tmp}
\begin{aligned}
 \Delta(c)
 &\sim
 b^{\frac{n-3}{2}} \frac{\sum_{i=1}^{N}
 \frac{1}{\Omega_{n-1}}
 \int_{S(N_{u_i}M)} G_{1}(\theta(u_i,v),c) \,\dd v}{N \,\Gamma(\frac{n-1}{2})}, \quad c\to +\infty.
\end{aligned}
\end{equation}

We will evaluate
\begin{equation}
\label{int_dv}
\begin{aligned} 
 \int_{S(N_{u_i}M)} & G_{1}(\theta(u_i,v),c) \,\dd v \\
& \sim\int_{S(N_{u_i}M)}
 \cos^{3-2\beta}\theta\sin^{n-3}\theta
 \exp\bigl\{ -b g_\beta(\cos^2\theta) -r_\beta(c^2,\cos^2\theta) \bigr\} \,\dd v
\end{aligned}
\end{equation}
as $b\to +\infty$.
Without loss of generality, we set $i=1$ and assume that $\rho_*=\max_{j>1}\rho_{1j}$ is attained when $j=2$: 
$\rho_*=\rho_{12}$.
Let $v_0=(u_2-\rho u_1)/\sqrt{1-\rho_*^2}$, and $v_1,\ldots,v_{n-2}$ be unit vectors such that $u_1,v_0,v_1,\ldots,v_{n-2}$ is an orthogonal basis of $\R^n$.
Then, any element $v$ of $S(N_{u_1}M)=\{v\mid \langle v,u_1\rangle=0,\,\Vert v\Vert=1\}$ writes
\begin{equation}
\label{eq:v}
 v = v(v_0;\phi,h) = \cos\phi v_0 + \sin\phi \sum_{i=1}^{n-2} v_i h_i, \ \ %
 \phi\in [0,\pi], \ \ h=(h_1,\ldots,h_{n-2})\in\S^{n-3}.
\end{equation}
The volume element of $S(N_{u_1}M)$ at $v$ is $\dd v = \sin^{n-3}\phi\,\dd\phi\,\dd h$,
where $\dd h$ is the volume element of $\S^{n-3}$.
Recall that
\begin{equation}
\label{eq:cos}
 \cos^2\theta(u_1,v) = \max_{j>1}\frac{\langle u_j,v\rangle^2}{(1-\langle u_1,u_j\rangle)^2+\langle u_j,v\rangle^2}, \quad v\in S(N_{u_1}M),
\end{equation}
which takes its maximum at $v=v_0$.
To see that, recall that $v_0$ is the tangent vector on the geodesic connecting $u_1$ and its nearest neighbor $u_2$
(Remark \ref{rem:voronoi} and Figure \ref{fig:voronoi}, right).
When 
$$v\in B(v_0,\varepsilon):=\{v(v_0;\phi,h) \mid 0\le\phi<\varepsilon,\,h\in\S^{n-3}\}, $$
the maximal value of \eqref{eq:cos} is attained at $j=2$.
Inserting the expression for $v$ from \eqref{eq:v} into \eqref{eq:cos} and using the definition of $v_0$ leads to
\begin{align*}
 \cos^2\theta(u_1,v)
 = \frac{\langle u_2,v\rangle^2}{(1-\langle u_1,u_2\rangle)^2 + \langle u_2,v\rangle^2}
 &= \frac{(1-\rho_*^2) \cos^2\phi }{(1-\rho_*)^2 + (1-\rho_*^2) \cos^2\phi} \\
 &= \frac{(1+\rho_*) \cos^2\phi }{1-\rho_* + (1+\rho_*) \cos^2\phi },
\end{align*}
hence
\begin{align}
 g_\beta(\cos^2\theta(u_1,v))
 &=
 g_\beta\left(\frac{1+\rho_*}{2}\right)
 - \left(1-\frac{1+\rho_*}{2}\right) \left(\frac{1+\rho_*}{2}\right)
 g_\beta'\left(\frac{1+\rho_*}{2}\right) \phi^2
 + O(\phi^3) \nonumber \\
 &= g_\beta(\cos^2\theta_*) + K \phi^2 + O(\phi^3),
\label{g_expansion}
\end{align}
where $K=-(\sin^2\theta_* \cos^2\theta_*) g'_\beta(\cos^2\theta_*) = \sin^2\theta_* \cos^{-2(1-\beta)}\theta_*$.

On the other hand, it holds by (\ref{eq:rbeta_def-o1}) that
\begin{align*}
 r_\beta(c^2,\cos^2(u_1,v)) - r_\beta(c^2,\cos^2\theta_*) 
 &= \frac{\partial}{\partial y}r_\beta(c^2,\cos^2\bar\theta)(\cos^2\theta(u_1,v)-\cos^2\theta_*) \\ 
 &= o(b) g'_\beta(\cos^2\bar\theta)(\cos^2\theta(u_1,v)-\cos^2\theta_*),
\end{align*}
where $o(b)$ is a factor such that $o(b)/b\to 0$ as $b\to +\infty$ uniformly in $v$,
and $\bar\theta$ is a point between $\theta(u_1,v)$ and $\theta_*$.
This, combined with
\begin{align*}
 g'_\beta(\cos^2\bar\theta)(\cos^2\theta(u_1,v)-\cos^2\theta_*)
 =& \bigl(-\cos^{-2(2-\beta)}\theta_*+o(1)\bigr) \bigl(-\cos^2\theta_*\sin^2\theta_* \phi^2+O(\phi^3)\bigr) \\
 =& K\phi^2+o(\phi^2),
\end{align*}
means that for arbitrary $\varepsilon_1>0$ and sufficiently large $b$ we have
\begin{equation}
 \bigl|r_\beta(c^2,\cos^2(u_1,v)) - r_\beta(c^2,\cos^2\theta_*)\bigr|<\varepsilon_1 b (K\phi^2 + o(\phi^2)) \quad\mbox{uniformly in $v$.} 
\label{r_bound}
\end{equation}
Now we evaluate the contribution of $v\in B(v_0,\varepsilon)$ in the integral in (\ref{int_dv}) by the Laplace method.
Recall that $b=c^{2(1-\beta)} \ell_0\bigl(c^2\bigr)$.
Let $f(\theta)=\cos^{3-2\beta}\theta\sin^{n-3}\theta$.
Using (\ref{g_expansion}) and (\ref{r_bound}), we have
\begin{align*}
\int_{B(v_0,\varepsilon)}
 & f(\theta(u_1,v)) \exp\bigl\{ -b g_\beta(\cos^2\theta(u_1,v)
 -r_\beta(c^2,\cos^2\theta(u_1,v) \bigr\} \dd v \\
 &\sim 
 f(\theta_*) \exp\bigl\{ -b g_\beta(\cos^2\theta_*) -r_\beta(c^2,\cos^2\theta_*) \bigr\} \int_0^\varepsilon \exp\{ -b K \phi^2 \} \sin^{n-3}\phi\,\dd \phi \int_{\S^{n-3}} \dd h \\
 &\sim 
 f(\theta_*)\exp\bigl\{ -b g_\beta(\cos^2\theta_*) -r_\beta(c^2,\cos^2\theta_*) \bigr\} (b K)^{-\frac{n-2}{2}} 2^{-1}\Gamma\left(\frac{n-2}{2}\right) \Omega_{n-2} \\
 &=
 f(\theta_*)\exp\bigl\{ -b g_\beta(\cos^2\theta_*) - r_\beta(c^2,\cos^2\theta_*)\bigr\} K^{-\frac{n-2}{2}} b^{-\frac{n-2}{2}} \pi^{\frac{n-2}{2}}.
\end{align*}

The contribution of $v\notin B(v_0,\varepsilon)$ to the integral in (\ref{int_dv}) is negligible.
Substituting this into the right-hand side of (\ref{Delta_c_tmp}), we have
\begin{align*}
 \Delta(c)
 \sim&
 \cos^{3-2\beta}\theta_*\sin^{n-3}\theta_*
 \exp\bigl\{-b g_\beta(\cos^2\theta_*) -r_\beta(c^2,\cos^2\theta_*)\bigr\}
 K^{-\frac{n-2}{2}}
 b^{-\frac{n-2}{2}} \pi^{\frac{n-2}{2}} \\
 & \times \frac{D}{N} b^{\frac{n-3}{2}} 2^{-1} \pi^{-\frac{n-1}{2}} \\
 =& \frac{D}{N} 
 \frac{\cos^{n(1-\beta)}\theta_*}{\tan\theta_*}
 \frac{1}{2\sqrt{\pi}} \bigl\{c^{2(1-\beta)}\ell(c^2)\bigr\}^{-\frac{1}{2}}
 \exp\bigl\{ -c^{2(1-\beta)}\ell\bigl(c^2\bigr) g_\beta(\cos^2\theta_*) -r_\beta(c^2,\cos^2\theta_*) \bigr\}.
\end{align*}
\end{proof}

\section{Summary and discussion}
\label{sec:summary}

In this paper, we examined the accuracy of the tube method (or equivalently, the Euler characteristic heuristic) for approximating the excursion probability
\[
 P(c)=\Pr\biggl(\sup_{u\in M}\Xi_u \ge c\biggr).
\]
Here, $\Xi$ is a non-Gaussian isotropic (spherically contoured) random field on $\S^{n-1}$ and 
$M\subset\S^{n-1}$ is a stratified $C^2$-manifold with positive reach.

We introduced a wide class $\mathcal{L}_{*,*}$ of distributions to which the probability law of the squared radial part $\|\xi\|^2$ of the field $\Xi$ belongs.
This class $\mathcal{L}_{*,*}$ includes light-tailed distributions and subexponential (and in particular, regularly varying) ones, which are known to be heavy-tailed (see Definition \ref{def:classL_ab} and Proposition \ref{prop:classL_ab}).

The tube formula is tail-valid in the sense that the relative approximation error $\Delta(c)$ tends to zero as $c$ tends to infinity for light-tailed as well as subexponential (but not regularly varying) distributions of $\|\xi\|^2$ 
 (Theorems \ref{thm:MainS*} and \ref{thm:MainS*_dim0}).
On the other hand, $\Delta(c)$ does not tend to zero for regularly varying distributions, as shown in Theorems \ref{thm:MainRV} and \ref{thm:MainRV_dim0}.
In both cases, the behavior of $\Delta(c)$ is governed by the critical radius $\theta_*$ of the index manifold $M$, as is well known for the Gaussian random fields.

In statistics, the tube method is used for simultaneous inference, including multiple testing or construction of simultaneous confidence bands in nonlinear regression.
The method is known to be highly accurate when the distribution of $\xi$ is multivariate Gaussian.
However, difficulties arise when the unknown variance $\sigma^2$ of the marginal Gaussian distribution of $\xi$ is estimated by an independent plug-in estimator $\widehat\sigma^2$.
In this typical scenario, $\widehat\sigma^2$ is a multiple of a chi-square random variable, making the tail of $\|\xi\|^2$ regularly varying.
We have shown that the tube method (including the Bonferroni approximation) is not tail-valid and thus does not provide an accurate approximation of the tail probability.
To address this issue, we provide a simple bound for the relative approximation error $\Delta(c)$ of the tube formula stated in Remarks \ref{rem:bound0} and \ref{rem:bound}.

The spherically contoured random field $\Xi$ in (\ref{Xi_u-again}) with $s\sim \chi_\nu/\sqrt{\nu}$ is an isotropic $t$-random field on the sphere,
whose marginal distributions are Student-$t_\nu$.
Our results show that the Euler characteristic heuristic  is not tail-valid for this $t$-random field.

Worsley \cite{worsley:1994} defined an isotropic $t$-random field in the Euclidean space as the ratio of an isotropic Gaussian random field to an independent isotropic $\chi$-random field, and derived the expected Euler characteristic of its excursion sets.
Then it has been established as a standard tool for fMRI data analysis
\cite{worsley-etal:1996,ashburner:2011}.
The derivation in \cite{worsley:1994} was generalized in \cite{adler-taylor:2007} under the name \emph{Gaussian kinematic formula}.
An open problem is to determine whether the Euler characteristic heuristic is tail-valid for such random fields.

\bibliographystyle{abbrv}
\bibliography{spherical_tube-Rev-bib}

\end{document}